\documentclass[12pt,reqno]{amsart}
\usepackage{amsfonts}

\usepackage{amssymb,amsmath,graphicx,amsfonts,euscript}
\usepackage{color}

\setlength{\textheight}{9in} \setlength{\textwidth}{6.2in}
\setlength{\oddsidemargin}{0.2in} \setlength{\evensidemargin}{0.2in}
\setlength{\parindent}{0.2in} \setlength{\topmargin}{-0.1in}
\setcounter{section}{0} \setcounter{figure}{0}
\setcounter{equation}{0}

\newtheorem{thm}{Theorem}[section]

\newtheorem{cor}{Corollary}[section]

\newtheorem{lemma}{Lemma}[section]

\theoremstyle{definition}
\newtheorem{define}{Definition}[section]

\theoremstyle{remark}
\newtheorem{rem}{Remark}[section]

\allowdisplaybreaks \voffset=-0.2in \numberwithin{equation}{section}

\begin{document}
\bigskip

\centerline{\Large\bf  Global regularity of 2D tropical climate model  }
\smallskip
\centerline{\Large\bf with zero thermal diffusion}
\bigskip

\centerline{Zhuan Ye}

\medskip

\centerline{Department of Mathematics and Statistics, Jiangsu Normal University,
}
\medskip

\centerline{101 Shanghai Road, Xuzhou 221116, Jiangsu, PR China}

\medskip

\centerline{E-mail: \texttt{yezhuan815@126.com
}}

\bigskip

{\bf Abstract:}~~%
This article studies the global regularity problem of the two-dimensional zero thermal diffusion tropical climate model with fractional dissipation, given by $(-\Delta)^{\alpha}u$ in the barotropic mode equation and by $(-\Delta)^{\beta}v$ in the first baroclinic mode of the vector velocity equation. More precisely, we show that the global regularity result holds true as long as $\alpha+\beta\geq2$ with $1<\alpha<2$. In addition, with no dissipation from both the temperature and the first baroclinic mode of the vector velocity, we also establish the global regularity result with the dissipation strength at the logarithmically supercritical level. Finally, our arguments can be extended to obtain the corresponding global regularity results of the higher dimensional cases.
{\vskip 1mm
 {\bf AMS Subject Classification 2010:}\quad 35Q35; 35B65; 76D03.

 {\bf Keywords:}
Tropical climate model; Global regularity; Fractional dissipation.}

\vskip .4in
\section{Introduction}
In this paper, we consider the global regularity problem of the
following two-dimensional (2D) tropical climate model with zero thermal diffusion
\begin{equation}\label{casTCM}
\left\{\aligned
&\partial_{t}u+(u \cdot \nabla) u + (-\Delta)^{\alpha}u+\nabla p+\nabla\cdot(v\otimes v)=0,\,\,\,\,\,x\in \mathbb{R}^{2},\,\,t>0, \\
&\partial_{t}v+(u \cdot \nabla)v+(-\Delta)^{\beta}v+\nabla \theta+ (v \cdot \nabla)u=0,\\
&\partial_{t}\theta+(u \cdot \nabla) \theta+\nabla\cdot v=0,\\
&\nabla\cdot u=0,\\
&u(x, 0)=u_{0}(x),  \quad v(x,0)=v_{0}(x),  \quad \theta(x,0)=\theta_{0}(x),
\endaligned\right.
\end{equation}
where $u=(u_{1}(x,t),\,u_{2}(x,t))$ is the barotropic mode, $v=(v_{1}(x,t),\,v_{2}(x,t))$ is the first baroclinic
mode of the vector velocity, $p=p(x,t)$
is the scalar pressure and $\theta=\theta(x,t)$ is the scalar temperature, respectively. Here $v\otimes v$ denotes the tensor product, namely  $v\otimes v = (v_{i}v_{j})$, $\alpha\geq 0$ and $\beta\geq0$ are real parameters. The fractional Laplacian operator $(-\Delta)^{\gamma}$ with $\gamma>0$ is defined through the Fourier transform, namely
$$\widehat{(-\Delta)^{\gamma}f}(\xi)=|\xi|^{2\gamma}\widehat{f}(\xi).$$
Recently, a great attention has been devoted to the study of nonlocal problems driven by fractional Laplacian type operators in the literature, not only for a pure academic interest, but also for the various applications in different fields, for example the physical phenomena in hydrodynamics, molecular biology such as anomalous diffusion in semiconductor growth, probability, finance and so on (see, e.g., \cite{Dronioui,MeMMt,Pekalskis,Woyczy}). We remark the convention that by
$\alpha=0$ we mean that there is no dissipation in $\eqref{casTCM}_{1}$, and similarly $\beta=0$ represents that there is no dissipation in $\eqref{casTCM}_{2}$.

\vskip .1in
The inviscid version of the model (\ref{casTCM}), namely $\alpha=\beta=0$, was
derived by Frierson, Majda and Pauluis for large-scale dynamics of precipitation fronts in the tropical atmosphere \cite{FMP}.
Mathematically these fractional dissipation terms of (\ref{casTCM}) increase the regularity for the system, which also significantly change the model properties and physics. The original tropical climate model
is derived from the inviscid primitive equations \cite{FMP},
and its viscous counterpart with the standard Laplacian can be derived by the same argument from the viscous primitive
equations (see \cite{LiTiti2}). For more background on the tropical climate model, we refer to \cite{G80,MA03,Mt66} and the references therein. The model (\ref{casTCM}) with fractional diffusion operators are relevant in modeling the so-called anomalous diffusion. Moreover, the model (\ref{casTCM}) allows us to investigate the long-range diffusive interactions.

\vskip .1in
Let us review some previous works on the tropical climate model. For the case \eqref{casTCM} with $\alpha=\beta=1$, Li and Titi \cite{LiTiti} (also Dong, Wu and Ye \cite{DWyeJNS18}) established the global regularity result by introducing a combined quantity of $v$ and $\theta$.
The global regularity for (\ref{casTCM}) when $\alpha>0$, $\beta=1$ and the equation of $\theta$ contains $\Delta \theta$ was obtained in \cite{Ye16jmaa}. Later, Dong, Wang, Wu and Zhang \cite{DWWZ} proved the global regularity for \eqref{casTCM} when $\alpha+\beta=2$ and $1<\beta\leq\frac{3}{2}$ or $\alpha=2,\,\beta=0$. By introducing several combined quantities and by exploiting the De Giorgi-Nash type estimate for for the transport-diffusion equation
with a more general forcing term, this result was further improved in \cite{DWye2018}. More precisely, $\beta\geq\frac{3-\alpha}{2}$ with $0<\alpha<1$ would ensure the global regularity for the model \eqref{casTCM}. We point out that when $\beta>\frac{3}{2}$, the global regularity also remains valid (see \cite{DWWYez17}). The global regularity results for many other cases were also proven in \cite{DWWYez17,DWyeJNS18}. It should be noted that all the above mentioned works require the restriction $\beta\geq1$ and the remainder case $\beta<1$ remains unsolved. In fact, this is the aim of the present paper. Now let us state our results. The first main result considering the case $\alpha+\beta\geq2$ with $1<\alpha<2$ can be stated as follows.
\vskip .1in
\begin{thm}\label{Th1}
 Consider (\ref{casTCM}) with $\alpha$ and $\beta$ satisfying
\begin{equation}\label{asfghr}
 \alpha+\beta\geq2,\quad 1<\alpha<2.
\end{equation}
Assume the initial data $(u_{0}, v_{0}, \theta_{0})
\in H^{s}(\mathbb{R}^{2})\times H^{s}(\mathbb{R}^{2})\times H^{s}(\mathbb{R}^{2})$ with $s>2$, and $\nabla\cdot u_0=0$. Then (\ref{casTCM}) admits a unique global solution $(u, v, \theta)$ such that for any
given $T>0$,
$$u\in C([0, T]; H^{s}(\mathbb{R}^{2}))\cap L^{2}(0, T; H^{s+\alpha}(\mathbb{R}^{2})),$$
$$v \in C([0, T]; H^{s}(\mathbb{R}^{2}))\cap L^{2}(0, T;
H^{s+\beta}(\mathbb{R}^{2})),\qquad \theta\in C([0, T]; H^{s}(\mathbb{R}^{2})).$$
\end{thm}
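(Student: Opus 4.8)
I would follow the classical scheme: construct a local-in-time solution in $H^{s}$, record that it can be continued as long as $\sup_{[0,T]}\big(\|u\|_{H^{s}}+\|v\|_{H^{s}}+\|\theta\|_{H^{s}}\big)<\infty$ for every $T$, and then establish the requisite global a priori bound. Local existence, uniqueness, and the continuation criterion are routine (Friedrichs mollification plus an $H^{s}$ energy estimate), so the content lies entirely in the a priori estimate, which I would obtain in two stages -- a first-order combined estimate, then propagation of the full $H^{s}$ norm. Throughout, two structural cancellations are decisive. Testing the three equations against $u,v,\theta$ and using $\nabla\cdot u=0$, the pair $\langle\nabla\cdot(v\otimes v),u\rangle,\ \langle(v\cdot\nabla)u,v\rangle$ cancels and the pair $\langle\nabla\theta,v\rangle,\ \langle\nabla\cdot v,\theta\rangle$ cancels, giving the energy law $\|(u,v,\theta)\|_{L^{2}}^{2}+2\int_{0}^{t}\big(\|\Lambda^{\alpha}u\|_{L^{2}}^{2}+\|\Lambda^{\beta}v\|_{L^{2}}^{2}\big)\le\|(u_{0},v_{0},\theta_{0})\|_{L^{2}}^{2}$; and -- crucially -- these cancellations are not particular to $L^{2}$: at any differential order the $\theta$--$v$ coupling still cancels exactly, while the $u$--$v$ coupling cancels up to a commutator remainder. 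This matters because $\theta$ carries no dissipation and $\beta<1$ is allowed, so the term $\nabla\theta$ in the $v$-equation costs one more derivative than the $\beta$-dissipation of $v$ can pay for; only the exact cancellation, not a perturbative estimate, controls it.

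For the first-order estimate I would bound $\|\nabla u\|_{L^{2}}^{2}+\|\nabla v\|_{L^{2}}^{2}+\|\nabla\theta\|_{L^{2}}^{2}$ simultaneously, handling the curl-free part of $v$ through the divergence of the $v$-equation tested against $\nabla\cdot v$ together with the $\theta$-equation tested against $-\Delta\theta$ (so the dangerous $\theta$--$v$ term cancels), the curl part of $v$ through the equation for $\nabla\times v$ (in which $\nabla\theta$ drops out), and $u$ through $\nabla\times u$. Here the bulk of the work is done by $1<\alpha$: the barotropic equation is hyperdissipative, so $(u\cdot\nabla)u$ is harmless and, once $u\in L^{2}_{t}H^{1+\alpha}$ is in hand, $\nabla u\in L^{2}_{t}L^{\infty}$ since $\alpha>1$. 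The single term that forces $\alpha+\beta\ge2$ is $\nabla\cdot(v\otimes v)$ in the barotropic equation: after integrating by parts it is controlled by $\|v\otimes v\|_{\dot H^{2-\alpha}}\|u\|_{\dot H^{1+\alpha}}$, and estimating $\|v\otimes v\|_{\dot H^{2-\alpha}}$ amounts to distributing roughly $3-\alpha$ derivatives over two copies of $v$, each controllable only up to $\dot H^{1+\beta}$; for Young's inequality to leave a power of the $v$-dissipation not exceeding the quadratic one, one needs $\tfrac{3-\alpha}{1+\beta}\le 1$, that is, $\alpha+\beta\ge2$ (the endpoint $\alpha+\beta=2$ needing extra care). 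The outcome is $u,v,\theta\in L^{\infty}_{t}H^{1}$, $u\in L^{2}_{t}H^{1+\alpha}$, $v\in L^{2}_{t}H^{1+\beta}$, whence $\nabla u\in L^{2}_{t}L^{\infty}$, $\|u\|_{L^{\infty}},\|v\|_{L^{\infty}}\in L^{2}_{t}$, and $\|\nabla\theta\|_{L^{2}}\in L^{\infty}_{t}$.

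To propagate the $H^{s}$ norm for $s>2$ I would apply $\Lambda^{s}$ to each equation, pair with $\Lambda^{s}u,\Lambda^{s}v,\Lambda^{s}\theta$, use Kato--Ponce commutator estimates together with the two cancellations, and close by Gr\"onwall. The delicate transport commutators $\langle[\Lambda^{s},u\cdot\nabla]\theta,\Lambda^{s}\theta\rangle$ and $\langle[\Lambda^{s},u\cdot\nabla]v,\Lambda^{s}v\rangle$ I would control by placing the top-order factor on $u$: since $\alpha>1$, $\|\Lambda^{s}u\|_{L^{\infty}}\lesssim\|u\|_{H^{s+1+\varepsilon}}\le\|u\|_{H^{s}}^{1-\lambda}\|u\|_{\dot H^{s+\alpha}}^{\lambda}$ with $\lambda<1$, so Young's inequality absorbs $\|u\|_{\dot H^{s+\alpha}}$ into the barotropic dissipation and leaves a term quadratic in $\big(\|u\|_{H^{s}},\|v\|_{H^{s}},\|\theta\|_{H^{s}}\big)$ with a time-coefficient (like $\|\nabla\theta\|_{L^{2}}^{2}$ or a bounded $L^{\infty}$-norm) that the first-order stage has already placed in $L^{1}_{t}$; the remaining nonlinearities $\nabla\cdot(v\otimes v)$ and $(v\cdot\nabla)u$ close because $1<\alpha<2$ makes $s+1-\alpha<s$ and $s+1<s+\alpha$, keeping all interpolations subcritical given $\alpha+\beta\ge1$. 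Gr\"onwall then yields $u,v,\theta\in C([0,T];H^{s})$, $u\in L^{2}_{t}H^{s+\alpha}$, $v\in L^{2}_{t}H^{s+\beta}$, which closes the continuation criterion; uniqueness follows from an $L^{2}$ estimate for the difference of two solutions, legitimate because $s>2$ gives $\nabla u,\nabla v,\nabla\theta\in L^{1}_{t}L^{\infty}$.

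I expect the principal obstacle to be exactly this: the temperature equation has no dissipation and $\beta<1$ is allowed, which rules out any perturbative handling of the $\nabla\theta$/$\nabla\cdot v$ coupling and makes the whole argument hinge on (i) the exact cancellation of that coupling at every order and (ii) using the strength $1<\alpha$ to push $\|\Lambda^{s}u\|_{L^{\infty}}$ and $\|v\|_{H^{s+1-\alpha}}$ below the barotropic dissipation level, so that no transport term forces an unavailable $\|\nabla\theta\|_{L^{\infty}}$ or $\|\nabla v\|_{L^{\infty}}$; the quantitative threshold $\alpha+\beta\ge2$ enters solely through the term $\nabla\cdot(v\otimes v)$ in the barotropic mode equation.
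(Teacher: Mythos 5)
Your overall architecture (local existence plus continuation criterion, $L^{2}$ energy law from the two cancellations, then $H^{1}$, then $H^{s}$ via Kato--Ponce and Gr\"onwall) matches the paper's, and your $H^{s}$ propagation step is sound once the lower-order bounds are in place. But there is a genuine gap at the heart of the argument: your first-order stage is circular. You write that ``once $u\in L^{2}_{t}H^{1+\alpha}$ is in hand, $\nabla u\in L^{2}_{t}L^{\infty}$ since $\alpha>1$,'' and you use $\nabla u\in L^{1}_{t}L^{\infty}$ as the Gr\"onwall coefficient that closes the very estimate producing $u\in L^{2}_{t}H^{1+\alpha}$. The obstruction is the temperature transport term: testing the $\theta$-equation against $-\Delta\theta$ leaves $\sum_{k}\int \partial_{k}u\cdot\nabla\theta\,\partial_{k}\theta\lesssim \|\nabla u\|_{L^{\infty}}\|\nabla\theta\|_{L^{2}}^{2}$, and since $\theta$ has no dissipation this factor cannot be absorbed; it must be known to lie in $L^{1}_{t}$ \emph{before} the $H^{1}$ estimate closes. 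The only information available at that point is the $L^{2}$-level bound $u\in L^{2}_{t}H^{\alpha}$ with $\alpha<2$, and $H^{\alpha}\not\hookrightarrow W^{1,\infty}$ in 2D. If instead you interpolate $\|\nabla u\|_{L^{\infty}}\lesssim \|\Lambda^{\alpha}u\|_{L^{2}}^{1-\lambda}\|\Lambda^{1+\alpha}u\|_{L^{2}}^{\lambda}$ and absorb the dissipative factor by Young, the resulting differential inequality for $E_{1}=\|\nabla(u,v,\theta)\|_{L^{2}}^{2}$ is of the form $E_{1}'\le a(t)E_{1}^{1+\mu}$ with $\mu>0$, which only gives a local-in-time bound. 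This is exactly the ``tangling'' the paper identifies: one needs regularity of $u$ to control $\theta$, regularity of $\theta$ to control $v$, and regularity of $v$ to control $u$.

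The paper breaks this circle with a step your proposal lacks: a maximal-regularity-type smoothing estimate for the fractional heat semigroup in Besov spaces (Lemma \ref{fraest01}), applied to the $u$-equation rewritten with Leray projection as $\partial_{t}u+\Lambda^{2\alpha}u=-(\mathbb{I}+(-\Delta)^{-1}\nabla\nabla\cdot)\nabla\cdot(v\otimes v+u\otimes u)$. This yields $\|u\|_{L^{1}_{t}B^{1+2/p}_{p,1}}$, hence $\int_{0}^{t}\|\nabla u\|_{L^{\infty}}\,d\tau<\infty$, controlled \emph{solely} by the $L^{2}$-energy quantities $\|v\|_{L^{2}_{t}H^{\beta}}$ and $\|u\|_{L^{2}_{t}H^{\alpha}}$ via a bilinear paraproduct estimate; the constraint $\alpha+\beta\ge 2$, $1<\alpha<2$ enters precisely in placing $v\otimes v$ in $B^{2+2/p-2\alpha}_{p,1}$ using two copies of $H^{\beta}$. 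Only after this is the $H^{1}$ estimate (and then $H^{s}$) a routine Gr\"onwall argument. Incidentally, your identification of where $\alpha+\beta\ge2$ enters is right in spirit but at the wrong energy level: it is needed at the $L^{2}$ dissipation level $v\in L^{2}_{t}H^{\beta}$, not at $v\in L^{2}_{t}H^{1+\beta}$. To repair your proof you would need to supply an argument of this Duhamel/smoothing type (or some other device giving $\nabla u\in L^{1}_{t}L^{\infty}$ from the basic energy alone); the energy method by itself does not close.
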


\vskip .1in
For the borderline case $\alpha=2,\,\beta=0$, we have the following global regularity result with logarithmically supercritical dissipation.
\begin{thm}\label{Th2}
Consider \eqref{casTCM} with $\alpha=2,\,\beta=0$, namely,
\begin{equation}\label{logcasTCM}
\left\{\aligned
&\partial_{t}u+(u \cdot \nabla) u +\mathcal{L}^{2} u+\nabla p+\nabla\cdot(v\otimes v)=0,\,\,\,\,\,x\in \mathbb{R}^{2},\,\,t>0, \\
&\partial_{t}v+(u \cdot \nabla)v+\nabla \theta+ (v \cdot \nabla)u=0,\\
&\partial_{t}\theta+(u \cdot \nabla) \theta+\nabla\cdot v=0,\\
&\nabla\cdot u=0,\\
&u(x, 0)=u_{0}(x),  \quad v(x,0)=v_{0}(x),  \quad \theta(x,0)=\theta_{0}(x),
\endaligned\right.
\end{equation}
where the operator $\mathcal{L}$ is defined by
$$\widehat{\mathcal{L}u}(\xi)=\frac{|\xi|^{2}}{g(\xi)}\widehat{u}(\xi)$$
for some non-decreasing symmetric function $g(\tau)\geq 1$ defined on $\tau\geq 0$. Assume the initial data $(u_{0}, v_{0}, \theta_{0})
\in H^{s}(\mathbb{R}^{2})\times H^{s}(\mathbb{R}^{2})\times H^{s}(\mathbb{R}^{2})$ with $s>2$, and $\nabla\cdot u_0=0$. If $g$ satisfies the following growth condition
\begin{equation}\label{logcobd}
\int_{e}^{\infty}\frac{d\tau}{\tau\sqrt{\ln \tau} g(\tau)}=\infty,\end{equation}
then the system \eqref{logcasTCM} admits a unique global solution $(u, v, \theta)$ such that for any
given $T>0$,
$$(u, v, \theta) \in C([0, T]; H^{s}(\mathbb{R}^{2})),\quad \mathcal{L} u\in L^{2}([0, T]; H^{s}(\mathbb{R}^{2})).
$$
\end{thm}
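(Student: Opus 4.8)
\smallskip
\noindent\emph{Proof proposal.} Local well\/-posedness of \eqref{logcasTCM} in $H^{s}(\mathbb{R}^{2})$, $s>2$, together with a continuation criterion is obtained in the standard way (Friedrichs mollification and the energy method), and uniqueness follows from an $L^{2}$ estimate on the difference of two solutions; so everything reduces to the a priori bound $\sup_{[0,T]}\big(\|u\|_{H^{s}}+\|v\|_{H^{s}}+\|\theta\|_{H^{s}}\big)<\infty$. The first step is the basic energy law: pairing $\eqref{logcasTCM}_{1}$ with $u$, $\eqref{logcasTCM}_{2}$ with $v$, $\eqref{logcasTCM}_{3}$ with $\theta$ and adding, the transport terms vanish by $\nabla\cdot u=0$, the pressure drops out, and the two algebraic cancellations $\int_{\mathbb{R}^{2}}\nabla\!\cdot\!(v\otimes v)\cdot u\,dx+\int_{\mathbb{R}^{2}}(v\cdot\nabla)u\cdot v\,dx=0$ and $\int_{\mathbb{R}^{2}}\nabla\theta\cdot v\,dx+\int_{\mathbb{R}^{2}}(\nabla\!\cdot\!v)\,\theta\,dx=0$ give $\frac{d}{dt}\big(\|u\|_{L^{2}}^{2}+\|v\|_{L^{2}}^{2}+\|\theta\|_{L^{2}}^{2}\big)+2\|\mathcal{L}u\|_{L^{2}}^{2}=0$. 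Hence $\|(u,v,\theta)\|_{L^{\infty}(0,T;L^{2})}\le C$ and $\|\mathcal{L}u\|_{L^{2}(0,T;L^{2})}\le C$, so in particular $\int_{0}^{T}\|\mathcal{L}u(t)\|_{L^{2}}\,dt\le C_{T}$ by Cauchy--Schwarz.

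\smallskip
The second ingredient is a logarithmically improved Sobolev inequality tied to $\mathcal{L}$. Using the Littlewood--Paley decomposition, the relation $\|\Delta_{j}u\|_{L^{2}}\simeq g(2^{j})\,2^{-2j}\|\mathcal{L}\Delta_{j}u\|_{L^{2}}$, Bernstein's inequality $\|\nabla\Delta_{j}u\|_{L^{\infty}}\lesssim 2^{2j}\|\Delta_{j}u\|_{L^{2}}$ on $\mathbb{R}^{2}$, and splitting the dyadic sum defining $\|\nabla u\|_{L^{\infty}}$ at $2^{J}\simeq(e+\|u\|_{H^{s}})^{1/(s-2)}$ (the high frequencies are summable against $\|u\|_{H^{s}}$ since $s>2$, while the low frequencies cost a factor $(J{+}1)^{1/2}g(2^{J})$ against $\|\mathcal{L}u\|_{L^{2}}$), one obtains, with $E(t):=\|u\|_{H^{s}}^{2}+\|v\|_{H^{s}}^{2}+\|\theta\|_{H^{s}}^{2}$,
\[
\|\nabla u\|_{L^{\infty}}\le C\Big(1+\|\mathcal{L}u\|_{L^{2}}\,\sqrt{\ln(e+E)}\;g\big(\sqrt{e+E}\,\big)\Big);
\]
here the monotonicity of $g$ and the condition \eqref{logcobd} (which forces $g$ to grow no faster than, roughly, $\sqrt{\ln\tau}$ times a slowly varying factor, in particular $\sup_{|\xi|\ge R}g(|\xi|)/|\xi|\to0$ as $R\to\infty$) are used to replace $g(2^{J})$ by $g(\sqrt{e+E}\,)$ up to a constant.

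\smallskip
The core of the argument is the $H^{s}$ energy estimate: apply $\Lambda^{s}$ to each equation, pair with $\Lambda^{s}u,\Lambda^{s}v,\Lambda^{s}\theta$ and sum. The transport commutators are controlled by the Kato--Ponce inequality and produce $C\|\nabla u\|_{L^{\infty}}E$; the pair $\int\Lambda^{s}\nabla\theta\cdot\Lambda^{s}v+\int\Lambda^{s}(\nabla\!\cdot\!v)\,\Lambda^{s}\theta$ cancels after an integration by parts, so the temperature injects no derivative loss; the pressure term vanishes. The genuinely delicate terms are the cross terms $\int\Lambda^{s}[\nabla\!\cdot\!(v\otimes v)]\cdot\Lambda^{s}u$ and $\int\Lambda^{s}[(v\cdot\nabla)u]\cdot\Lambda^{s}v$. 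Following the scheme used by Dong--Wang--Wu--Zhang for the non\/-degenerate case $g\equiv1$, one exploits the cancellation of their leading pieces at the top order (the same algebraic mechanism as in the $L^{2}$ law, applied now to the Leibniz/commutator decompositions) and estimates the residual contributions through H\"older and Gagliardo--Nirenberg inequalities with $L^{4}$-norms --- so that only $\|v\|_{L^{4}}\lesssim\|v\|_{L^{2}}^{1/2}\|\nabla v\|_{L^{2}}^{1/2}$ enters (and hence, after a parallel lower\/-order estimate, only the enstrophy\/-type quantity $\|\nabla v\|_{L^{2}}$), paired with a norm of $u$ of order slightly above $s$ such as $\|\Lambda^{s+3/2}u\|_{L^{2}}$ --- and then absorbs those high\/-order norms of $u$ into the dissipation $\|\mathcal{L}\Lambda^{s}u\|_{L^{2}}^{2}$ using the sublinearity of $g$ quoted above. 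This yields a differential inequality of the form $\frac{d}{dt}E+c\,\|\mathcal{L}\Lambda^{s}u\|_{L^{2}}^{2}\le C\big(1+\|\nabla u\|_{L^{\infty}}\big)\,(e+E)$. I expect this step --- handling the coupling terms $\nabla\!\cdot\!(v\otimes v)$ and $(v\cdot\nabla)u$ with \emph{no} dissipation available on $v$ or $\theta$, so that every high\/-frequency factor must be routed through the single (bi\/-Laplacian\/-type, only logarithmically supercritical) dissipation on $u$ --- to be the main obstacle.

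\smallskip
Finally, one concludes by an Osgood\/-type argument. Inserting the logarithmic Sobolev inequality of the second paragraph into the last display (and noting $\sqrt{\ln(e+E)}\,g(\sqrt{e+E}\,)\ge1$), one obtains, schematically,
\[
\frac{d}{dt}(e+E)\le C\big(1+\|\mathcal{L}u\|_{L^{2}}\big)\,(e+E)\,\sqrt{\ln(e+E)}\;g\big(\sqrt{e+E}\,\big).
\]
Dividing and integrating, with $\mathcal{G}(y):=\int_{e}^{y}\frac{d\sigma}{\sigma\sqrt{\ln\sigma}\,g(\sqrt{\sigma}\,)}$, gives $\mathcal{G}(e+E(t))\le \mathcal{G}(e+E(0))+C\int_{0}^{t}\big(1+\|\mathcal{L}u\|_{L^{2}}\big)\,ds\le C_{T}$; since the substitution $\sigma=\tau^{2}$ yields $\mathcal{G}(\infty)=\sqrt{2}\int_{\sqrt{e}}^{\infty}\frac{d\tau}{\tau\sqrt{\ln\tau}\,g(\tau)}=+\infty$ by \eqref{logcobd}, $E(t)$ remains bounded on $[0,T]$, which is the desired a priori estimate; propagation of $H^{s}$ regularity and the stated regularity class then follow in the usual way. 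The higher\/-dimensional statement is obtained by exactly the same scheme, with the Sobolev and Bernstein exponents and the cutoff $2^{J}$ adjusted to the dimension.
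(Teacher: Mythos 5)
Your overall skeleton ($L^2$ energy law, a logarithmic Sobolev bound for $\|\nabla u\|_{L^{\infty}}$ via a high--low frequency splitting tuned to the unknown, and an Osgood argument using \eqref{logcobd}) matches the paper's, and those three ingredients are carried out correctly. The gap is in the step you yourself flag as the main obstacle: you try to close the estimate directly at the $H^{s}$ level and assert a differential inequality of the form $\frac{d}{dt}E+c\|\mathcal{L}\Lambda^{s}u\|_{L^{2}}^{2}\le C(1+\|\nabla u\|_{L^{\infty}})(e+E)$ by invoking ``cancellation of the leading pieces'' of the cross terms. That cancellation only removes the piece coming from $(v\cdot\nabla)\Lambda^{s}v$ against $(v\cdot\nabla)\Lambda^{s}u$; because $\nabla\cdot v\neq0$, the decomposition $\nabla\cdot(v\otimes v)=(v\cdot\nabla)v+v(\nabla\cdot v)$ leaves a surviving top-order term of the type $\int_{\mathbb{R}^{2}}v_{i}\,\Lambda^{s}v_{j}\,\partial_{j}\Lambda^{s}u_{i}\,dx$. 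Any estimate of this term produces a factor $\|v\|_{L^{q}}$ with $q>2$ (or $\|\nabla v\|_{L^{2}}$, via your $L^{4}$ route) multiplying $\|\Lambda^{s}v\|_{L^{2}}$ and a norm of $u$ above order $s$; after absorbing the latter into $\|\mathcal{L}\Lambda^{s}u\|_{L^{2}}^{2}$ you are left with $\|v\|_{L^{q}}^{2}\|\Lambda^{s}v\|_{L^{2}}^{2}$, which is \emph{superlinear} in $E$ unless $\|v\|_{L^{q}}$ is already known to be bounded. The Osgood condition \eqref{logcobd} cannot absorb a power $E^{1+\varepsilon}$, so the inequality you wrote does not follow. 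Your parenthetical ``parallel lower-order estimate'' giving control of $\|\nabla v\|_{L^{2}}$ is precisely the missing ingredient, and it is not routine: the $H^{1}$ estimate faces the very same term $\int v\,\nabla v\,\nabla^{2}u\,dx$, and $\|\nabla^{2}u\|_{L^{2}}$ is not dominated by $\|\mathcal{L}u\|_{L^{2}}$ when $g$ is unbounded, which is exactly why the known $H^{1}$ argument for $g\equiv1$ does not carry over to the logarithmically supercritical case.

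The paper's resolution is to insert an intermediate stage you skipped: a global $H^{\sigma}$ bound for some (any) $\sigma\in(0,1)$. At that regularity level the troublesome term is estimated as $\|v\|_{L^{2/(1-\sigma)}}\|\Lambda^{\sigma}v\|_{L^{2}}\|\Lambda^{\sigma+1}u\|_{L^{2/\sigma}}$, and the 2D Sobolev embeddings $\dot H^{\sigma}\hookrightarrow L^{2/(1-\sigma)}$ and $\|\Lambda^{\sigma+1}u\|_{L^{2/\sigma}}\lesssim\|\Delta u\|_{L^{2}}$ place \emph{both} factors of $v$ in $\|\Lambda^{\sigma}v\|_{L^{2}}$, so the right-hand side is genuinely linear in $X(t)=\|\Lambda^{\sigma}(u,v,\theta)\|_{L^{2}}^{2}$ with coefficient $\|\nabla u\|_{L^{\infty}}+\|\Delta u\|_{L^{2}}$; it is to \emph{this} inequality that the frequency splitting and the Osgood lemma are applied. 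The resulting bounds on $\|\Lambda^{\sigma}v\|_{L^{2}}$, $\|\Lambda^{\sigma}\theta\|_{L^{2}}$ and $\int_{0}^{t}\|\nabla u(\tau)\|_{L^{\infty}}\,d\tau$ then turn the $H^{s}$ energy estimate into a linear Gronwall inequality, with the high norms of $u$ absorbed into $\|\mathcal{L}\Lambda^{s}u\|_{L^{2}}^{2}$ exactly as you propose. To repair your argument, either supply this $H^{\sigma}$ (or an honest $H^{1}$) bound first, or explain how the surviving non-divergence-free term can be handled without it; as written, the proof does not close.
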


\begin{rem}  \rm
It is worthwhile to mention that when $\alpha+\beta\geq2$ with $1<\alpha<2$, the following 2D generalized magnetohydrodynamic (MHD)
equations admit a unique global regular solution (see \cite{wujmfm})
\begin{equation}\label{mhd}
\left\{\aligned
&\partial_{t}u+(u \cdot \nabla) u +(-\Delta)^{\alpha} u+\nabla p- \nabla \cdot (b\otimes b)=0,\,\,\,\,\,x\in \mathbb{R}^{2},\,\,t>0, \\
&\partial_{t} b + (u \cdot \nabla)b+(-\Delta)^{\beta} b -  (b \cdot \nabla)u=0,\\
&\nabla\cdot u=0, \quad \nabla\cdot b=0.
\endaligned\right.
\end{equation}
On the one hand, the tropical climate model (\ref{casTCM}) bears some similarities. Obviously, when $\theta$ is a constant and $\nabla\cdot v=0$, the system (\ref{casTCM}) reduces to the 2D MHD-type equations \eqref{mhd}.
On the other hand, compared with \eqref{mhd}, the tropical climate model (\ref{casTCM}) is more involved to deal with. One key point is due to the presence of $\nabla\theta$ in the $v$-equation. The second point lies on that the $\theta$ satisfies a pure transport equation with a forcing term $-\nabla\cdot v$.
Another reason is that $b$ can be assumed divergence-free due
to the fact that this property is preserved as time evolves, but $v$ in
(\ref{casTCM}) is not divergence-free. This makes the global regularity of (\ref{casTCM}) very complicated when compared with the corresponding MHD equations. It is hoped that this study on the tropical climate
model will help us to further understand the global regularity issue on the MHD equations.
\end{rem}

\vskip .1in
\begin{rem}\rm
We also remark that the typical examples satisfying the condition (\ref{logcobd}) are as follows
\begin{equation}
\begin{split}
& g(\xi)=\big[\ln(e+|\xi|)\big]^{\frac{1}{2}}; \\
& g(\xi)=\big[\ln(e+|\xi|)\big]^{\frac{1}{2}} \ln (e+\ln(e+|\xi|));\\
& g(\xi)=\big[\ln(e+|\xi|)\big]^{\frac{1}{2}} \ln (e+\ln(e+|\xi|)) \ln(e+\ln(e+\ln(e+|\xi|))).\nonumber
\end{split}
\end{equation}
Consequently, Theorem \ref{Th2} improves \cite[Theorem 1.2]{DWWZ} logarithmically.
\end{rem}

\vskip .1in
\begin{rem}\rm
The global regularity for the model (\ref{casTCM}) when $\alpha+\beta<2$ with $1<\alpha<2$ is currently open. For this case, it appears extremely difficult to establish the global $H^{\varrho}$-bound even for small $\varrho>0$. As stated above, when $\nabla\cdot v=0$ and $\theta$ is a constant, (\ref{casTCM}) reduces to the 2D MHD-type equations \eqref{mhd}, whose global regularity result still requires dissipation only logarithmically weaker than the dissipation level  $\alpha+\beta=2$ with $1<\alpha<2$ (see \cite{wujmfm} for details). Therefore, this is a very interesting and changeling problem, which is left for future.
\end{rem}

\vskip .1in
We point out that the corresponding global regularity results of (\ref{casTCM}) are true for the higher dimensions. More precisely, we have the following corollaries.
\begin{cor}\label{Th3}
Consider the following $n$-dimensional ($n\geq3$) incompressible tropical climate model
\begin{equation}\label{awer1}
\left\{\aligned
&\partial_{t}u+(u \cdot \nabla) u +(-\Delta)^{\alpha}u+\nabla p+\nabla\cdot(v\otimes v)=0,\,\,\,\,\,x\in \mathbb{R}^{n},\,\,t>0, \\
&\partial_{t}v+(u \cdot \nabla)v+(-\Delta)^{\beta}v+\nabla \theta+ (v \cdot \nabla)u=0,\\
&\partial_{t}\theta+(u \cdot \nabla) \theta+\nabla\cdot v=0,\\
&\nabla\cdot u=0,\\
&u(x, 0)=u_{0}(x),  \quad v(x,0)=v_{0}(x),  \quad \theta(x,0)=\theta_{0}(x).
\endaligned\right.
\end{equation}
Assume the initial data $(u_{0}, v_{0}, \theta_{0})
\in H^{s}(\mathbb{R}^{n})\times H^{s}(\mathbb{R}^{n})\times H^{s}(\mathbb{R}^{n})$ with $s>1+\frac{n}{2}$, and $\nabla\cdot u_0=0$.
If $\alpha$ and $\beta$ satisfy
\begin{equation}
 \alpha+\beta\geq1+\frac{n}{2},\quad \frac{1}{2}+\frac{n}{4}\leq\alpha<1+\frac{n}{2},\nonumber
\end{equation}
then (\ref{awer1}) admits a unique global solution $(u, v, \theta)$ such that for any given $T>0$,
$$u\in C([0, T]; H^{s}(\mathbb{R}^{n}))\cap L^{2}(0, T; H^{s+\alpha}(\mathbb{R}^{n})),$$
$$v \in C([0, T]; H^{s}(\mathbb{R}^{n}))\cap L^{2}(0, T;
H^{s+\beta}(\mathbb{R}^{n})),\qquad \theta\in C([0, T]; H^{s}(\mathbb{R}^{n})).$$
\end{cor}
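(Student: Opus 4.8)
\noindent\textit{Proof idea for Corollary \ref{Th3}.}\;
The plan is to mirror the proof of Theorem \ref{Th1}, keeping careful track of how the space dimension $n$ enters the functional inequalities. As usual, it suffices to establish, for each $T>0$, an a priori bound on $\|(u,v,\theta)(t)\|_{H^{s}}$ on $[0,T]$: local well-posedness in $H^{s}(\mathbb{R}^{n})$ with $s>1+\frac{n}{2}$ is standard (Friedrichs mollification or Galerkin, using that $H^{s}$ is a Banach algebra embedding into $W^{1,\infty}$), and a continuation criterion then upgrades the local solution to a global one once the a priori bound is available. First I would record the basic energy identity: testing $\eqref{awer1}_{1}$ with $u$, $\eqref{awer1}_{2}$ with $v$ and $\eqref{awer1}_{3}$ with $\theta$, the coupling terms cancel in pairs exactly as in dimension two (using $\nabla\cdot u=0$, one has $\int\nabla\cdot(v\otimes v)\cdot u+\int(v\cdot\nabla)u\cdot v=0$ and $\int\nabla\theta\cdot v+\int(\nabla\cdot v)\,\theta=0$), and this cancellation is algebraic, hence dimension-free. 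Writing $\Lambda=(-\Delta)^{1/2}$, this yields
\[
\|u(t)\|_{L^{2}}^{2}+\|v(t)\|_{L^{2}}^{2}+\|\theta(t)\|_{L^{2}}^{2}+2\int_{0}^{t}\big(\|\Lambda^{\alpha}u\|_{L^{2}}^{2}+\|\Lambda^{\beta}v\|_{L^{2}}^{2}\big)\,d\tau=\|u_{0}\|_{L^{2}}^{2}+\|v_{0}\|_{L^{2}}^{2}+\|\theta_{0}\|_{L^{2}}^{2},
\]
so that $u,v,\theta\in L^{\infty}(0,T;L^{2})$, $u\in L^{2}(0,T;\dot H^{\alpha})$ and $v\in L^{2}(0,T;\dot H^{\beta})$.

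The decisive step is the same combined-quantity device used for Theorem \ref{Th1}: since $\theta$ solves only a transport equation forced by $-\nabla\cdot v$ and $v$ is not divergence-free, one cannot close an $H^{\sigma}$ estimate for $\theta$ directly without losing a derivative on $v$, so instead one forms the auxiliary unknowns that cancel the $\nabla\theta$--$\nabla\cdot v$ interaction and propagates a low regularity norm $H^{\sigma}$ (for some small $\sigma>0$) of $u$ together with this combined quantity. The hypotheses enter precisely as in $n=2$: the exponent $\alpha=\frac12+\frac n4$ is the $L^{2}$-scaling-critical exponent for the $u$-equation in $\mathbb{R}^{n}$, so the generalized Navier--Stokes part of the estimate is governed by the (sub)critical theory, while $\alpha+\beta\ge 1+\frac n2$ (the natural analogue of $\alpha+\beta\ge 2$, since $n/2+1=2$ when $n=2$) is what makes the product and commutator estimates for the coupling terms $\nabla\cdot(v\otimes v)$ and $(v\cdot\nabla)u$ close; these are treated by Littlewood--Paley and Bernstein inequalities, Gagliardo--Nirenberg interpolation, the Kato--Ponce/Kenig--Ponce--Vega commutator estimates, and the De Giorgi--Nash type estimate for the transport--diffusion equation, all with the substitutions $2\rightsquigarrow 1+\frac n2$ and $1\rightsquigarrow\frac12+\frac n4$ in the exponents. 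With a global bound on some $H^{\sigma}$ norm in hand, a standard bootstrap finishes the proof: one performs energy estimates for $\Lambda^{s}u$, $\Lambda^{s}v$, $\Lambda^{s}\theta$, uses the commutator estimates together with $s>1+\frac n2$ (so that $\|\nabla u\|_{L^{\infty}}\lesssim\|u\|_{H^{s}}$), and concludes by Gr\"onwall that $(u,v,\theta)\in C([0,T];H^{s})$ with $\Lambda^{\alpha}u\in L^{2}(0,T;H^{s})$, $\Lambda^{\beta}v\in L^{2}(0,T;H^{s})$. Uniqueness follows from an $L^{2}$-type energy estimate on the difference of two solutions.

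The main obstacle is the one flagged in the remarks above: because $\beta$ is only required to satisfy $\beta\ge 1+\frac n2-\alpha$, which can be well below $1$ (precisely when $\alpha>\frac n2$), while $\theta$ carries no dissipation and $v$ is not divergence-free, the temperature--baroclinic coupling must be absorbed through structure rather than by brute force, and in $\mathbb{R}^{n}$ the weaker Sobolev embeddings tighten the margin available in every interpolation. Thus the technical heart of the argument is to verify that the combined-quantity estimates underlying Theorem \ref{Th1} survive the passage $2\to 1+\frac n2$: one must check that each Gagliardo--Nirenberg and commutator exponent arising in the low-regularity propagation stays in its admissible range under the single standing assumptions $\alpha\ge\frac12+\frac n4$ and $\alpha+\beta\ge 1+\frac n2$, with no hidden additional loss. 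Once this bookkeeping is carried out, the remaining steps are routine adaptations of the two-dimensional argument, and the case $n=2$ of the Corollary recovers Theorem \ref{Th1}.
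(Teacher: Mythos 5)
Your reduction to a priori estimates, the basic energy identity, and the final $H^{s}$ Gr\"onwall step are sound, but the mechanism you propose for the decisive low-regularity bound is not the one underlying Theorem \ref{Th1} (whose proof the Corollary simply transplants to $\mathbb{R}^{n}$), and it is in fact the mechanism the paper explicitly argues is unavailable in the regime the Corollary covers. The proof of Theorem \ref{Th1} uses no combined quantity and no De Giorgi--Nash estimate: those devices, taken from \cite{LiTiti} and \cite{DWye2018}, hinge on the dissipation in the $v$-equation being strong enough (essentially $\beta\geq1$), whereas under the hypotheses of Corollary \ref{Th3} one has only $\beta\geq 1+\frac n2-\alpha$, which can be arbitrarily close to $0$ when $\alpha$ approaches $1+\frac n2$. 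Building your argument on ``the same combined-quantity device used for Theorem \ref{Th1}'' therefore rests on a misreading of what that theorem's proof does, and the bookkeeping you defer to (``verify that the combined-quantity estimates survive the passage $2\to 1+\frac n2$'') would not come out: it is exactly where the earlier approaches break down for small $\beta$.

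The step you are missing is Lemma \ref{cale22} and the space-time estimate of Lemma \ref{fraest01}: one rewrites the $u$-equation as a fractional heat equation
\begin{equation*}
\partial_{t}u+\Lambda^{2\alpha}u=-\left(\mathbb{I}+(-\Delta)^{-1}\nabla\nabla\cdot\right)\nabla\cdot(v\otimes v+u\otimes u),
\end{equation*}
and uses the maximal-regularity-type bound \eqref{scvtyeq2} together with the bilinear Besov estimate \eqref{xcrty12} to show $u\in L^{1}_{t}B^{1+n/p}_{p,1}\hookrightarrow L^{1}_{t}W^{1,\infty}$, the forcing being controlled solely by the energy quantities $\|u\|_{L^{2}_{t}H^{\alpha}}$ and $\|v\|_{L^{2}_{t}H^{\beta}}$; the hypotheses $\alpha+\beta\geq 1+\frac n2$ and $\frac12+\frac n4\leq\alpha<1+\frac n2$ are precisely what make the exponents in that bilinear estimate admissible once the 2D choices are rescaled to $\mathbb{R}^{n}$. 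With $\int_{0}^{T}\|\nabla u\|_{L^{\infty}}\,dt<\infty$ in hand, the $H^{1}$ and then $H^{s}$ estimates close by Gr\"onwall with no further structure needed from the $\theta$--$v$ coupling. Note also that your closing remark, that $\|\nabla u\|_{L^{\infty}}\lesssim\|u\|_{H^{s}}$ for $s>1+\frac n2$ lets Gr\"onwall conclude, only produces a Riccati-type inequality and cannot by itself yield a global bound; the $L^{1}_{t}$ Lipschitz control must be obtained first, and that is the entire content of the argument.
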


\begin{cor}\label{Th4}
Consider the following $n$-dimensional ($n\geq3$) incompressible tropical climate model
\begin{equation}\label{awer2}
\left\{\aligned
&\partial_{t}u+(u \cdot \nabla) u +\mathcal{L}^{2} u+\nabla p+\nabla\cdot(v\otimes v)=0,\,\,\,\,\,x\in \mathbb{R}^{n},\,\,t>0, \\
&\partial_{t}v+(u \cdot \nabla)v+\nabla \theta+ (v \cdot \nabla)u=0,\\
&\partial_{t}\theta+(u \cdot \nabla) \theta+\nabla\cdot v=0,\\
&\nabla\cdot u=0,\\
&u(x, 0)=u_{0}(x),  \quad v(x,0)=v_{0}(x),  \quad \theta(x,0)=\theta_{0}(x),
\endaligned\right.
\end{equation}
where the operator $\mathcal{L}$ is defined by
$$\widehat{\mathcal{L}u}(\xi)=\frac{|\xi|^{1+\frac{n}{2}}}{g(\xi)}\widehat{u}(\xi)$$
for some non-decreasing symmetric function $g(\tau)\geq 1$ defined on $\tau\geq 0$. Assume the initial data $(u_{0}, v_{0}, \theta_{0})
\in H^{s}(\mathbb{R}^{n})\times H^{s}(\mathbb{R}^{n})\times H^{s}(\mathbb{R}^{n})$ with $s>1+\frac{n}{2}$, and $\nabla\cdot u_0=0$. If $g$ satisfies the following growth condition
\begin{equation}
\int_{e}^{\infty}\frac{d\tau}{\tau\sqrt{\ln \tau} g(\tau)}=\infty,\nonumber
\end{equation}
then the system \eqref{awer2} admits a unique global solution $(u, v, \theta)$ such that for any
given $T>0$,
$$(u, v, \theta) \in C([0, T]; H^{s}(\mathbb{R}^{n})),\quad \mathcal{L} u\in L^{2}([0, T]; H^{s}(\mathbb{R}^{n})).
$$
\end{cor}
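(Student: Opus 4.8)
The plan is to establish Corollary~\ref{Th4} by transcribing the proof of Theorem~\ref{Th2} to $\mathbb{R}^{n}$; the only structural changes are that the borderline dissipation exponent $2=1+\tfrac{2}{2}$ of the planar case is replaced by $1+\tfrac{n}{2}$, and that the embedding $H^{s}(\mathbb{R}^{n})\hookrightarrow W^{1,\infty}(\mathbb{R}^{n})$ and the Moser/product estimates are invoked with $s>1+\tfrac{n}{2}$. Write $\Lambda=(-\Delta)^{1/2}$. By the standard Friedrichs-mollification and energy scheme, \eqref{awer2} admits a unique local-in-time solution in $C([0,T_{0}];H^{s})$, together with the continuation criterion that the solution persists on $[0,T]$ provided $\int_{0}^{T}\|\nabla u(t)\|_{L^{\infty}}\,dt<\infty$ and $\sup_{[0,T]}\big(\|v(t)\|_{H^{s}}+\|\theta(t)\|_{H^{s}}\big)<\infty$. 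Hence it suffices to produce, for each fixed $T>0$, an a priori bound on $\|(u,v,\theta)(t)\|_{H^{s}}$ over $[0,T]$.

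First I would record the basic energy identity. Pairing the three equations of \eqref{awer2} with $u$, $v$ and $\theta$ respectively and summing, the transport terms and the pressure term vanish because $\nabla\cdot u=0$, while the coupling cancels: $\int_{\mathbb{R}^{n}}\nabla\theta\cdot v\,dx+\int_{\mathbb{R}^{n}}(\nabla\cdot v)\theta\,dx=0$, and, using $\nabla\cdot(v\otimes v)=(v\cdot\nabla)v+(\nabla\cdot v)v$, also $\int_{\mathbb{R}^{n}}\nabla\cdot(v\otimes v)\cdot u\,dx+\int_{\mathbb{R}^{n}}(v\cdot\nabla)u\cdot v\,dx=0$. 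This gives the identity
\[
\|(u,v,\theta)(t)\|_{L^{2}}^{2}+2\int_{0}^{t}\|\mathcal{L}u(\tau)\|_{L^{2}}^{2}\,d\tau=\|(u_{0},v_{0},\theta_{0})\|_{L^{2}}^{2}.
\]

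The heart of the proof---and the only place where the condition \eqref{logcobd} is used, exactly as in Theorem~\ref{Th2}---is an a priori estimate at the critical regularity level. Since neither $v$ nor $\theta$ carries dissipation, the sole smoothing originates from the borderline operator $\mathcal{L}^{2}$, whose symbol $|\xi|^{2+n}/g(\xi)^{2}$ sits just below $|\xi|^{2+n}=|\xi|^{2(1+n/2)}$. I would run a frequency-localized (Littlewood--Paley) energy estimate for $u$ together with a suitably chosen combined quantity of $v$ and $\theta$, exploiting the antisymmetry of the $\nabla\theta$--$(\nabla\cdot v)$ coupling, and control the borderline nonlinear contributions---chiefly $\|\nabla u\|_{L^{\infty}}$---through a logarithmic Sobolev inequality of Brezis--Gallouet--Wainger/Kozono--Taniuchi type adapted to the operator $\mathcal{L}$, schematically $\|\nabla u\|_{L^{\infty}}\lesssim 1+\|\Lambda^{1+n/2}u\|_{L^{2}}\sqrt{\ln\!\big(e+\|u\|_{H^{s}}\big)}$; the $\sqrt{\ln}$ here is precisely the origin of the factor $\sqrt{\ln\tau}$ in \eqref{logcobd}. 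The result is a differential inequality of the schematic shape $\tfrac{d}{dt}Y(t)+c\,\mathcal{D}(t)\le C\,Y(t)\,\ln\!\big(e+\mathcal{D}(t)\big)$, where $Y$ gathers the relevant critical norms of $(u,v,\theta)$---including a subcritical Sobolev norm of $v$ embedding into $L^{\infty}$, propagated via the transport structure of the $v$-equation---and $\mathcal{D}(t)\approx\int_{\mathbb{R}^{n}}|\xi|^{2+n}g(\xi)^{-2}|\widehat{u}(\xi,t)|^{2}\,d\xi$. The divergence $\int_{e}^{\infty}\frac{d\tau}{\tau\sqrt{\ln\tau}\,g(\tau)}=\infty$ is exactly what permits a modified Gr\"onwall/Osgood argument to conclude $\sup_{[0,T]}Y(t)<\infty$ and $\int_{0}^{T}\mathcal{D}(t)\,dt<\infty$, so in particular $\int_{0}^{T}\|\nabla u(t)\|_{L^{\infty}}\,dt<\infty$ and $\|v\|_{L^{\infty}([0,T];L^{\infty})}<\infty$. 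I expect this critical estimate, and the extraction of the logarithmic gain hidden in $g$, to be the main obstacle---just as it is in the two-dimensional case.

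Finally, with the bounds of the previous step at hand, I would propagate the full $H^{s}$ norm by applying $\Lambda^{s}$ to \eqref{awer2}, pairing with $\Lambda^{s}u$, $\Lambda^{s}v$, $\Lambda^{s}\theta$ and summing. The $\nabla\theta$--$(\nabla\cdot v)$ coupling cancels identically at the $H^{s}$ level, the $\nabla\cdot(v\otimes v)$--$(v\cdot\nabla)u$ coupling cancels up to commutators, and the remaining terms are controlled by the Kato--Ponce commutator and product estimates; the top-order contributions fall on $u$ alone (through $\|\Lambda^{s+1}u\|_{L^{2}}$) and are absorbed into the dissipation because $|\xi|^{2s+2}\le C\big(1+|\xi|^{2+n+2s}g(\xi)^{-2}\big)$ (as $g$ grows more slowly than any positive power of $|\xi|$, by \eqref{logcobd}), while the lower-order terms carry the factors $\|\nabla u\|_{L^{\infty}}$, $\|\nabla v\|_{L^{\infty}}$, $\|\nabla\theta\|_{L^{\infty}}$, $\|v\|_{L^{\infty}}$. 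Since $\|\nabla v\|_{L^{\infty}}+\|\nabla\theta\|_{L^{\infty}}\lesssim\|(v,\theta)\|_{H^{s}}$ for $s>1+\tfrac{n}{2}$, Gr\"onwall's inequality, combined with the time-integrability of $\|\nabla u\|_{L^{\infty}}$ and the boundedness of $\|v\|_{L^{\infty}}$ established in the previous step, yields $\sup_{[0,T]}\|(u,v,\theta)(t)\|_{H^{s}}<\infty$, which is the asserted global bound; the time-regularity statements then follow from the equations, and uniqueness from a routine $L^{2}$ estimate for the difference of two solutions. All of these steps are structurally independent of the dimension, so the proof reduces to that of Theorem~\ref{Th2} once the Sobolev exponents are adjusted.
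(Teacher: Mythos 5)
Your proposal follows essentially the same route as the paper, which establishes this corollary by repeating the proof of Theorem \ref{Th2} with the critical exponent $2$ replaced by $1+\frac{n}{2}$: the basic $L^2$ energy identity, a low-regularity ($H^{\sigma}$, $0<\sigma<1$) a priori estimate closed by an Osgood argument under \eqref{logcobd}, and then $H^{s}$ propagation via Kato--Ponce commutator estimates. The only point to tighten is that the Brezis--Gallouet--Wainger step must be run through the dyadic high--low decomposition so that it produces the factor $g(2^{N})\sqrt{N}\,\|\mathcal{L}u\|_{L^{2}}$ rather than the unweighted $\|\Lambda^{1+\frac{n}{2}}u\|_{L^{2}}$ appearing in your schematic inequality (the latter is not controlled by the dissipation); you flag this extraction of $g$ yourself as the main obstacle, and the paper's Lemma \ref{yetle32} is exactly that computation.
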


\begin{rem}\rm
The proof of Corollary \ref{Th3} and Corollary \ref{Th4} can be performed by the same arguments adopted in proving Theorem \ref{Th1} and Theorem \ref{Th2}, respectively. It suffices to make some suitable modifications due to the change of dimensions. To avoid the redundancy, we thus omits the details.
\end{rem}

\vskip .2in
Now we give some rough ideas on our proof of Theorem \ref{Th1} and Theorem \ref{Th2}. The proof is not straightforward and demands new techniques. We describe the main difficulties and explain the techniques to overcome them.
We state that the existence and uniqueness of local smooth solutions can be performed through the standard approach (see for example \cite[Proposition A.1]{DWyeJNS18}). Thus, in order to complete the proof of Theorem \ref{Th1} and Theorem \ref{Th2}, it is sufficient to establish the global {\it a priori} estimates that hold for any given finite time $T>0$.

\vskip .1in
We begin with Theorem \ref{Th1}. The global $L^2$ bound for $(u, v, \theta)$, along with the time integrability of $\|\Lambda^\alpha u\|_{L^2}^2$, $\|\Lambda^\beta v\|_{L^2}^2$, is immediate due to the special structure of
(\ref{casTCM}) and $\nabla\cdot u=0$. The next step is to derive the global $H^1$-bound for $(u, v, \theta)$, but direct energy estimates do not appear to easily achieve this bound. One of the obstacles is that there is no dissipation in the $\theta$-equation and the dissipation in the $v$-equation is not strong enough (the case $\beta<1$ is our main target). As a matter of fact, to derive any regularity of $\theta$, we need to control $\|\nabla u\|_{L^\infty}$ or for a quantity that is close to the regularity level of $\|\nabla u\|_{L^\infty}$. However, to achieve this goal, one has to first obtain the global bound on the forcing in the equation of $u$, namely $\|\nabla\nabla\cdot (v\otimes v)\|_{L^\infty}$.
Unfortunately the equation of $v$ involves $\nabla\theta$ and one has to
know the regularity of $\theta$ first in order to bound $v$. This
tangling makes the estimates of the regularity of $(u, v, \theta)$ very complicated. If one adopts the ideas of \cite{LiTiti,Ye16jmaa,DWWZ,DWye2018,DWWYez17,DWyeJNS18}, then it heavily depends on the restriction $\beta\geq1$. However, for the MHD equations \eqref{mhd}, we do not need to face that problem, and as we know, the global $H^1$-bound for $(u, b)$
can be easily derived by the simple interpolation inequality.
To overcome these difficulties, we make fully use of the key space-time estimate (see Lemma \ref{fraest01}) to the $u$-equation to derive the following crucial bound under the assumption $\alpha+\beta\geq2$ with $1<\alpha<2$ (see Lemma \ref{cale22} for details)
\begin{align}\label{keyt1}
\int_{0}^{t}{
\|\nabla u(\tau)\|_{L^{\infty}}\,d\tau}\leq C(t,u_0, v_0, \theta_0).
\end{align}
With \eqref{keyt1} in hand, we can show the global $H^1$-bound for $(u, v, \theta)$. Then, the global $H^s$-bound for $(u, v, \theta)$ will be obtained.
This ends the proof of Theorem \ref{Th1}.

\vskip .1in
We now explain the proof of Theorem \ref{Th2}. We first have the following basic global $L^2$-energy
\begin{align}\label{exstp14}
\int_{0}^{t}{\Big\|\frac{\Lambda^{2}}{ g (\Lambda)}u \Big\|_{L^{2}}^{2} \,d\tau} \leq
C(u_{0},v_{0},\theta_{0}).
\end{align}
The above bound \eqref{exstp14} plays an important role in deriving the higher regularity of the solution. In general, the next step is to show the global $H^1$-bound for $(u, v, \theta)$. However, one may face the following difficult term due to the absence of $\nabla\cdot v=0$
$$\int_{\mathbb{R}^{2}}v\,\nabla v \,\nabla^{2}u\,dx,$$
which can not be controlled by \eqref{exstp14}. On the one hand, we do not need to
face that problem for the MHD equations \eqref{mhd} due to $\nabla\cdot b=0$. On the other hand, when $\mathcal{L}=-\Delta$, namely $g\equiv1$, one can still derive the global $H^1$-bound for $(u, v, \theta)$ with the help of the redefined Gronwall type inequality (see \cite[Proposition 5]{DWWZ} for details).
To bypass these difficulties coming from $\nabla\cdot v\neq0$ and the logarithmically supercritical dissipation, we resort to establish the lower global regularity for $(u, v, \theta)$, namely, $H^{\sigma}$-bound (for any positive $\sigma<1$). In fact, invoking several commutator estimates yields (see \eqref{tyht305})
\begin{align*}
\frac{d}{dt}X(t)+Y(t)\leq C(\|\nabla u\|_{L^{\infty}}+\|\Delta u\|_{L^{2}})X(t),
\end{align*}
where
$$X(t):=\|\Lambda^{\sigma}u(t)\|_{L^{2}}^{2}+\|\Lambda^{\sigma}v(t)\|_{L^{2}}^{2}
+\|\Lambda^{\sigma}\theta(t)\|_{L^{2}}^{2},\qquad Y(t):=\|\mathcal{L}\Lambda^{\sigma}
u(t)\|_{L^{2}}^{2}.$$
In order to handle the two terms $\|\nabla u\|_{L^{\infty}}$ and $\|\Delta u\|_{L^{2}}$, we take fully exploit of the Littlewood-Paley technique to show that
\begin{align}
 \frac{d}{dt}X(t)+Y(t) \leq C(e+X(t))\sqrt{\ln(e+X(t))}g \left((e+X(t))^{\frac{1}{\kappa}}\right)\left(1+
 \Big\|\frac{\Lambda^{2}}{ g (\Lambda)}u \Big\|_{L^{2}}\right),\nonumber
\end{align}
where $\kappa>0$ is a constant.
Keeping in mind \eqref{exstp14} and \eqref{logcobd}, we infer that
\begin{align}\label{vftyu8}
X(t)+\int_{0}^{t}Y(\tau)\,d\tau\leq C(t,u_{0},v_{0},\theta_{0}).
\end{align}
Finally, with \eqref{vftyu8} in hand, we can propagate all the higher regularities. Consequently, this ends the proof of Theorem \ref{Th2}.

 \vskip .2in
The rest of the paper is organized as follows. In Section 2 we carry out the proof of Theorem \ref{Th1}. Section 3 is devoted to the proof of Theorem \ref{Th2}.
In Appendix \ref{aaa1}, we collect the Littlewood-Paley
decomposition, Besov spaces and some related facts. For the sake of completeness, we present the proof of \eqref{cftghy1} in Appendix \ref{aaa3}.

\vskip .3in
\section{The proof of Theorem \ref{Th1}}\setcounter{equation}{0}
This section is devoted to the proof of Theorem \ref{Th1}.
Before the proof, we will state several notations. For simplicity, we always denote $\Lambda=(-\Delta)^{\frac{1}{2}}$. In this paper, we shall use the convention that $C$ denotes a generic constant, whose value may change from line to line. We shall write $C(\cdot,\cdot\cdot\cdot,\cdot)$ as the constant $C$ depends only on the quantities appearing in parentheses.
For a quasi-Banach space $X$ and for any $0<T\leq\infty$, we use standard notation $L^{p}(0,T;X)$ or $L_{T}^{p}(X)$ for
the quasi-Banach space of Bochner measurable functions $f$ from $(0,T)$ to $X$ endowed with the norm
\begin{equation*}
\|f\|_{L_{T}^{p}(X)}:=\left\{\aligned
&\left(\int_{0}^{T}{\|f(.,t)\|_{X}^{p}\,dt}\right)^{\frac{1}{p}}, \,\,\,\,\,1\leq p<\infty,\\
&\sup_{0\leq t\leq T}\|f(.,t)\|_{X},\qquad\qquad p=\infty.
\endaligned\right.
\end{equation*}

\vskip .1in
Now we begin with the basic global $L^2$-bound.
\begin{lemma}\label{cale21}
Assume $(u_{0},v_{0},\theta_{0})$ satisfies the conditions stated in Theorem \ref{Th1}.
Then for any corresponding smooth solution $(u, v, \theta)$ of  (\ref{casTCM}), we have, for any $t>0$,
\begin{align}
\|u(t)\|_{L^{2}}^{2}+\|v(t)\|_{L^{2}}^{2}+\|\theta(t)\|_{L^{2}}^{2}+ \int_{0}^{t}{
(\|\Lambda^{\alpha}
u(\tau)\|_{L^{2}}^{2}+\|\Lambda^{\beta} v(\tau)\|_{L^{2}}^{2})\,d\tau}\leq C(u_0, v_0, \theta_0).\nonumber
\end{align}
\end{lemma}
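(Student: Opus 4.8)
The plan is to establish the basic energy law by performing $L^2$ inner products of the three evolution equations in \eqref{casTCM} with $u$, $v$, and $\theta$ respectively, and then summing. First I would dot the $u$-equation with $u$: the transport term $(u\cdot\nabla)u$ contributes nothing after integration by parts since $\nabla\cdot u=0$, the pressure term $\nabla p$ drops out for the same reason, the dissipation term yields $\|\Lambda^{\alpha}u\|_{L^2}^2$, and the coupling term $\nabla\cdot(v\otimes v)$ produces $\int (v\otimes v):\nabla u\,dx$ after an integration by parts. Next, dotting the $v$-equation with $v$ kills $(u\cdot\nabla)v$ again by $\nabla\cdot u=0$, produces $\|\Lambda^{\beta}v\|_{L^2}^2$, leaves $\int\nabla\theta\cdot v\,dx$, and leaves $\int (v\cdot\nabla)u\cdot v\,dx$. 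Finally, multiplying the $\theta$-equation by $\theta$ and integrating removes $(u\cdot\nabla)\theta$ and produces $\int(\nabla\cdot v)\theta\,dx$.

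The key cancellations come next. The term $\int\nabla\theta\cdot v\,dx$ from the $v$-equation and the term $\int(\nabla\cdot v)\theta\,dx$ from the $\theta$-equation cancel exactly after integration by parts: $\int\nabla\theta\cdot v\,dx=-\int\theta\,\nabla\cdot v\,dx$. Similarly, the two remaining quadratic-in-velocity coupling terms, $\int(v\otimes v):\nabla u\,dx$ (written out as $\int v_i v_j \partial_i u_j\,dx$) and $\int (v\cdot\nabla)u\cdot v\,dx=\int v_i \partial_i u_j v_j\,dx$, must combine. Here one has to be slightly careful about which integration by parts was performed; writing $\nabla\cdot(v\otimes v)$ as $(\,(v\cdot\nabla)v + (\nabla\cdot v)v\,)$ is one option, but the cleaner route is to note that $\int \partial_i(v_iv_j)\,u_j\,dx = -\int v_iv_j\,\partial_i u_j\,dx$, and $\int v_i(\partial_i u_j)v_j\,dx$ is literally the negative of this — so they sum to zero. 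Thus all the coupling and pressure contributions vanish, leaving
\begin{align}
\frac{1}{2}\frac{d}{dt}\big(\|u\|_{L^2}^2+\|v\|_{L^2}^2+\|\theta\|_{L^2}^2\big)+\|\Lambda^{\alpha}u\|_{L^2}^2+\|\Lambda^{\beta}v\|_{L^2}^2=0.\nonumber
\end{align}

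Integrating this identity in time from $0$ to $t$ immediately gives the stated bound with $C(u_0,v_0,\theta_0)=\|u_0\|_{L^2}^2+\|v_0\|_{L^2}^2+\|\theta_0\|_{L^2}^2$; in fact one obtains equality rather than inequality for smooth solutions. One should also note that for $\beta=0$ the term $\|\Lambda^{\beta}v\|_{L^2}^2$ is simply $\|v\|_{L^2}^2$ and the energy law still holds with that term moved appropriately, but as long as $\beta\ge 0$ the stated inequality is valid. The only mild technical point — hardly an obstacle — is justifying the integrations by parts and the vanishing of boundary terms at infinity, which is legitimate because we are arguing on smooth (sufficiently decaying, since $H^s$ with $s>2$) solutions and all manipulations are then standard; on a technical level one may first carry out the computation on frequency-truncated or mollified approximations and pass to the limit. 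No genuinely hard step arises here: this lemma is the soft a priori bound that feeds into the substantially harder $H^1$ estimate captured in \eqref{keyt1}.
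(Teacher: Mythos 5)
Your proof is correct and follows essentially the same route as the paper: take the $L^2$ inner product of each equation with $u$, $v$, $\theta$ respectively, use $\nabla\cdot u=0$ to kill the transport and pressure terms, observe the exact cancellations $\int_{\mathbb{R}^{2}}\nabla\cdot(v\otimes v)\cdot u\,dx+\int_{\mathbb{R}^{2}}(v\cdot\nabla)u\cdot v\,dx=0$ and $\int_{\mathbb{R}^{2}}\nabla\theta\cdot v\,dx+\int_{\mathbb{R}^{2}}(\nabla\cdot v)\theta\,dx=0$, and integrate the resulting energy identity in time. The sign bookkeeping in your cancellation argument is right, so nothing further is needed.
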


\begin{proof}
Taking the inner product of (\ref{casTCM}) with $(u, v, \theta)$ and using $\nabla\cdot u=0$, it follows from integration by parts that
\begin{eqnarray}\label{ttkl001}
\frac{1}{2}\frac{d}{dt}(\|u(t)\|_{L^{2}}^{2}+\|v(t)\|_{L^{2}}^{2}+\|\theta(t)
\|_{L^{2}}^{2})+\|\Lambda^{\alpha}
u\|_{L^{2}}^{2}+\|\Lambda^{\beta} v\|_{L^{2}}^{2}=0,
\end{eqnarray}
where the following cancellation identities have been used
$$\int_{\mathbb{R}^{2}}{\nabla\cdot(v\otimes v)\cdot u
\,dx}+\int_{\mathbb{R}^{2}}{ (v \cdot \nabla)u\cdot v\,dx}=0,$$
$$\int_{\mathbb{R}^{2}}{\nabla \theta\cdot v
\,dx}+\int_{\mathbb{R}^{2}}{(\nabla\cdot v)\theta\,dx}=0.$$
The desired estimate follows by integrating (\ref{ttkl001}) in the time. This completes the proof of Lemma \ref{cale21}.
\end{proof}

\vskip .1in
Next we establish the following crucial estimate.
\begin{lemma}\label{cale22}
Assume that $(u_{0},v_{0},\theta_{0})$ satisfies the conditions stated in Theorem \ref{Th1}. If $\alpha+\beta\geq2$ with $1<\alpha<2$,
then for any corresponding smooth solution $(u, v, \theta)$ of (\ref{casTCM}), we have, for any $t>0$,
\begin{align}\label{ttkl002}
\int_{0}^{t}{
\|\nabla u(\tau)\|_{L^{\infty}}\,d\tau}\leq C(t,u_0, v_0, \theta_0).
\end{align}
\end{lemma}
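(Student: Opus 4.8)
The plan is to apply the key space-time estimate (Lemma \ref{fraest01}) to the barotropic mode equation $\eqref{casTCM}_1$, viewing the nonlinear terms $(u\cdot\nabla)u$, $\nabla p$ and $\nabla\cdot(v\otimes v)$ as forcing. Since the equation reads $\partial_t u + \mathbb{P}[(u\cdot\nabla)u + \nabla\cdot(v\otimes v)] + \Lambda^{2\alpha}u = 0$ (after applying the Leray projection $\mathbb{P}$ to eliminate the pressure), the maximal-regularity type estimate should yield, for a suitable Besov-space norm at the regularity level of $\nabla u \in L^\infty$, a bound of the form
\begin{align}
\int_{0}^{t}\|\nabla u(\tau)\|_{L^{\infty}}\,d\tau
\leq C\big(\|u_0\|,\ \|\mathbb{P}\nabla\cdot(v\otimes v)\|_{L^1_t(\cdot)},\ \|(u\cdot\nabla)u\|_{L^1_t(\cdot)}\big),\nonumber
\end{align}
where the forcing norms are taken in the appropriate space (likely $L^1_t B^{0}_{\infty,1}$ or a Lebesgue-space analogue that embeds into it, chosen so that $1<\alpha<2$ gives exactly the gain needed). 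The first step is therefore to fix the correct functional framework so that the smoothing of $\Lambda^{2\alpha}$ with $\alpha>1$ upgrades $L^2$-type control of the forcing to $L^1_t L^\infty$ control of $\nabla u$.

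The second step is to estimate the forcing terms using only the energy bounds from Lemma \ref{cale21}, namely $u\in L^\infty_t L^2$, $\Lambda^\alpha u\in L^2_t L^2$, $v\in L^\infty_t L^2$, $\Lambda^\beta v\in L^2_t L^2$. For the magnetic-type term $\nabla\cdot(v\otimes v)$, I would interpolate: $v$ lies in $L^\infty_t L^2\cap L^2_t \dot H^\beta$, so by Sobolev embedding and interpolation $v\otimes v$ sits in a space whose two derivatives are controlled in $L^1_t$ of the target forcing space precisely when $\beta$ is large enough — and the hypothesis $\alpha+\beta\ge 2$ is exactly what makes the total number of derivatives $2\alpha$ (from the dissipation) cover the $2$ derivatives hitting $v\otimes v$ plus the $2(1-\beta)$ deficit in each factor of $v$. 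For the convection term $(u\cdot\nabla)u$, one writes it as $\nabla\cdot(u\otimes u)$ and similarly uses $u\in L^\infty_t L^2\cap L^2_t\dot H^\alpha$; here $1<\alpha$ gives room to spare. Care must be taken that the Leray projection $\mathbb{P}$ is bounded on the Besov/Lebesgue spaces in play (it is, being a Calderón–Zygmund operator).

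The main obstacle I anticipate is the $v\otimes v$ term in the regime $\beta<1$, which is the whole point of the paper. When $\beta<1$ we have less than $H^1$ control on $v$ coming from dissipation alone, so $v\otimes v$ is barely in $L^2$ in space and one cannot afford to lose derivatives carelessly; the two derivatives in $\nabla\nabla\cdot(v\otimes v)$ (needed because $\nabla u$ already costs one derivative, and the forcing sits one level below $u$) must be absorbed entirely by the $\alpha$-smoothing, leaving a razor-thin margin. The identity $\alpha+\beta\ge 2$ with $1<\alpha<2$ is saturated exactly here, so the interpolation exponents and the choice of intermediate Besov space must be done with no slack — in particular one likely needs the endpoint space $B^0_{\infty,1}$ (rather than $L^\infty$) as the target for $\nabla u$ so that the logarithmic-type losses in the Bernstein/embedding inequalities are avoided. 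A secondary technical point is handling the low-frequency part of $u$, which the dissipative smoothing does not help with; this is dispatched using the $L^\infty_t L^2$ bound and Bernstein's inequality on a fixed finite number of low-frequency blocks, contributing only a term polynomial in $t$ and the initial data.
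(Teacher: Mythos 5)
Your proposal follows essentially the same route as the paper: rewrite the $u$-equation with the Leray projection so that $\nabla\cdot(v\otimes v+u\otimes u)$ is the forcing, apply the space--time smoothing estimate of Lemma \ref{fraest01} in an endpoint Besov framework (the paper uses $L^1_tB^{1+2/p}_{p,1}$ with $p\in(1,\infty)$ and then embeds into $W^{1,\infty}$), and close the forcing bounds using only the energy estimates $u\in L^2_tH^\alpha$, $v\in L^2_tH^\beta$ via bilinear Besov product estimates, where the derivative count saturates exactly at $\alpha+\beta\geq 2$. Your outline correctly identifies the key lemma, the critical term, and the role of the hypothesis, and matches the paper's argument in all essentials.
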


\vskip .1in
Before proving Lemma \ref{cale22}, we need the following space-time estimate of the solution of a linear equation with fractional diffusion. We point out that the Lebesgue space version of Lemma \ref{fraest01} previously appeared in \cite{DWWYez17}.
\begin{lemma}\label{fraest01}
Consider the following linear equation with $\gamma>0$,
\begin{eqnarray}\label{scvtyeq1}
\partial_{t}f+\Lambda^{\gamma}f=g,\qquad f(x,0)=f_{0}(x),\quad x\in \mathbb R^n,
\end{eqnarray}
then for any $1\leq p,\,r\leq\infty$ and for any $\sigma\in \mathbb{R}$, we have
\begin{eqnarray}\label{adscvtyeq2}
\|f\|_{L_{t}^{r}B_{p,r}^{\sigma}} \leq C(1+t)^{\frac{1}{r}}\|f_{0}\|_{B_{p,r}^{\sigma-{\frac{\gamma}{r}}}}+ C(1+t)\|g\|_{L_{t}^{r}B_{p,r}^{\sigma-\gamma}},
\end{eqnarray}
where $C=C(\sigma,\gamma)>0$ is a constant independent of variable $t$. In particular, there holds
\begin{eqnarray}\label{scvtyeq2}
\|f\|_{L_{t}^{1}B_{p,1}^{\sigma}} \leq C(1+t) \|f_{0}\|_{B_{p,1}^{\sigma-\gamma}}+ C(1+t)\|g\|_{L_{t}^{1}B_{p,1}^{\sigma-\gamma}},
\end{eqnarray}
\end{lemma}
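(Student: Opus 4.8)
The plan is to work blockwise via the Littlewood--Paley decomposition and combine the classical smoothing estimate for the semigroup $e^{-t\Lambda^{\gamma}}$ with Minkowski's inequality in time. First I would fix a dyadic block $\Delta_j$ (with $j\geq -1$) and apply it to \eqref{scvtyeq1}, obtaining
$\partial_t \Delta_j f + \Lambda^{\gamma}\Delta_j f = \Delta_j g$,
whose Duhamel representation is
$\Delta_j f(t) = e^{-t\Lambda^{\gamma}}\Delta_j f_0 + \int_0^t e^{-(t-\tau)\Lambda^{\gamma}}\Delta_j g(\tau)\,d\tau$.
The key input is the Bernstein-type decay estimate for the fractional heat semigroup localized to frequencies $|\xi|\sim 2^j$: there is $c>0$ with
$\|e^{-t\Lambda^{\gamma}}\Delta_j h\|_{L^p} \leq C\, e^{-c\,2^{\gamma j} t}\,\|\Delta_j h\|_{L^p}$
for $j\geq 0$, while the low block $j=-1$ is handled trivially since $e^{-t\Lambda^{\gamma}}$ is bounded on $L^p$ uniformly in $t$ (one may absorb the low-frequency contribution into the $(1+t)$ factors). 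This estimate is standard and I would cite it or sketch it from the Fourier multiplier bound on the heat kernel.

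Next I would take the $L^r(0,t)$ norm in time of $\|\Delta_j f(\tau)\|_{L^p}$. For the homogeneous part, $\|e^{-\tau\Lambda^{\gamma}}\Delta_j f_0\|_{L^p} \le C e^{-c 2^{\gamma j}\tau}\|\Delta_j f_0\|_{L^p}$, so
$\big\|\,\|e^{-\cdot\Lambda^{\gamma}}\Delta_j f_0\|_{L^p}\big\|_{L^r(0,t)} \le C\,\|\Delta_j f_0\|_{L^p}\,\big(\int_0^t e^{-cr 2^{\gamma j}\tau}d\tau\big)^{1/r} \le C\,2^{-\gamma j/r}\|\Delta_j f_0\|_{L^p}$
for $j\ge 0$, and $\le C(1+t)^{1/r}\|\Delta_{-1}f_0\|_{L^p}$ for $j=-1$. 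Multiplying by $2^{\sigma j}$ and taking $\ell^r$ in $j$ gives exactly $C(1+t)^{1/r}\|f_0\|_{B_{p,r}^{\sigma-\gamma/r}}$, since the factor $2^{-\gamma j/r}$ shifts the regularity index by $\gamma/r$. For the Duhamel part I would apply Minkowski's inequality in integral form together with Young's convolution inequality in the time variable: writing $K_j(\tau):=C e^{-c2^{\gamma j}\tau}\mathbf 1_{\tau>0}$, one has $\|\Delta_j f\|_{L^p}(t) \lesssim (K_j * \|\Delta_j g\|_{L^p})(t)$ on $[0,t]$, hence by Young with exponents $(1,r)\to r$,
$\big\|\|\Delta_j f\|_{L^p}\big\|_{L^r(0,t)} \le \|K_j\|_{L^1(0,t)}\,\big\|\|\Delta_j g\|_{L^p}\big\|_{L^r(0,t)} \le C\,2^{-\gamma j}\,\big\|\|\Delta_j g\|_{L^p}\big\|_{L^r(0,t)}$
for $j\ge0$ (the $2^{-\gamma j}$ coming from $\|K_j\|_{L^1}\le C 2^{-\gamma j}$), and $\le C(1+t)\big\|\|\Delta_{-1}g\|_{L^p}\big\|_{L^r(0,t)}$ for the low block. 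Multiplying by $2^{\sigma j}$, taking $\ell^r_j$, and using that $2^{-\gamma j}\cdot 2^{\sigma j} = 2^{(\sigma-\gamma)j}$ converts this into $C(1+t)\|g\|_{L_t^r B_{p,r}^{\sigma-\gamma}}$. Summing the two contributions yields \eqref{adscvtyeq2}; the special case \eqref{scvtyeq2} is just $r=1$.

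The main obstacle, and the only place requiring care, is the interchange of the $\ell^r_j$ summation with the $L^r_t$ norm and the convolution: one must apply Minkowski's inequality in the correct direction so that the dyadic sum sits \emph{inside} the time norm where the Besov norm is defined, and simultaneously get Young's inequality at the block level. Because all exponents here are $\ell^r$ and $L^r$ with the same $r$, Minkowski's inequality $\|\,\cdot\,\|_{\ell^r_j L^r_t} = \|\,\cdot\,\|_{L^r_t \ell^r_j}$ is an identity for $r<\infty$ and a straightforward bound for $r=\infty$, so this is harmless; the genuinely substantive ingredient is simply the frequency-localized semigroup decay. A secondary point is bookkeeping of the $(1+t)$ factors: on the finite interval $[0,t]$ the bounds $\|K_j\|_{L^1(0,t)}\le C\min\{t,2^{-\gamma j}\}$ and $\int_0^t e^{-cr2^{\gamma j}\tau}d\tau \le \min\{t,(cr2^{\gamma j})^{-1}\}$ are what produce the uniform constant $C(\sigma,\gamma)$ together with the explicit $(1+t)$ and $(1+t)^{1/r}$ prefactors, so no hidden $t$-dependence enters the constant.
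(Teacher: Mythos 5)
Your proposal is correct and follows essentially the same route as the paper: localize with $\Delta_j$, use Duhamel together with the frequency-localized decay $\|e^{-t\Lambda^{\gamma}}\Delta_j h\|_{L^p}\le C e^{-c2^{j\gamma}t}\|\Delta_j h\|_{L^p}$ (the paper's \eqref{cftghy1}, proved in its appendix), treat the block $\Delta_{-1}$ via the uniform $L^p$-boundedness of the semigroup, apply Young's inequality in time, and pass from $\widetilde{L}^r_tB^{\sigma}_{p,r}$ to $L^r_tB^{\sigma}_{p,r}$ using that the two coincide when the summation and time exponents agree. No gaps.
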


\begin{proof}
Applying nonhomogeneous operator $\Delta_{j}$ (see Appendix \ref{aaa1} for its definition) to \eqref{scvtyeq1}, we have
\begin{eqnarray}
\partial_{t}\Delta_{j}f+\Lambda^{\gamma}\Delta_{j}f=\Delta_{j}g.\nonumber
\end{eqnarray}
By the Duhamel formula, we get
\begin{eqnarray}
\Delta_{j}f(t,x)=e^{-t\Lambda^{\gamma}}\Delta_{j}f_{0}(x)+\int_{0}^{t}
e^{-(t-\tau)\Lambda^{\gamma}}\Delta_{j}g(\tau,x)\,d\tau.\nonumber
\end{eqnarray}
For every $j\geq0$, we may conclude the following estimate
\begin{eqnarray}\label{cftghy1}
\|e^{-t\Lambda^{\gamma}}\Delta_{j}h\|_{L^{p}}\leq C_{1}e^{-C_{2}t2^{j\gamma}}\|\Delta_{j}h\|_{L^{p}},
\end{eqnarray}
where $C_{1}>0,\,C_{2}>0$ are two absolute constants independent of $j$. For the sake of completeness, the proof of \eqref{cftghy1} will be provided in Appendix \ref{aaa3}. Using \eqref{cftghy1}, it implies
\begin{align}
\|\Delta_{j}f\|_{L^{p}}\leq C_{1}e^{-C_{2}t2^{j\gamma}}\|\Delta_{j}f_{0}\|_{L^{p}}+C_{1}\int_{0}^{t}
e^{-C_{2}(t-\tau)2^{j\gamma}}
\|\Delta_{j}g(\tau,x)\|_{L^{p}}\,d\tau,\nonumber
\end{align}
which further gives
\begin{align}\label{cftghy2}
\|\Delta_{j}f\|_{L_{t}^{r}L^{p}}\leq C2^{-\frac{j\gamma}{r}} ( \|\Delta_{j}f_{0}\|_{L^{p}}+
2^{-j\gamma(1-\frac{1}{r})}\|\Delta_{j}g \|_{L_{t}^{r}L^{p}}).
\end{align}
Invoking the following estimate (see \cite{Wuejde})
\begin{align} 
\|e^{-t\Lambda^{\gamma}} h\|_{L^{p}}\leq C \| h\|_{L^{p}},\nonumber
\end{align}
we derive
\begin{align}\label{cftghy4}
\|\Delta_{-1}f\|_{L_{t}^{r}L^{p}}&\leq t^{\frac{1}{r}}\|\Delta_{-1}f\|_{L_{t}^{\infty}L^{p}}\nonumber\\&\leq Ct^{\frac{1}{r}}( \|\Delta_{-1}f_{0}\|_{L^{p}}+
\|\Delta_{-1}g \|_{L_{t}^{1}L^{p}})\nonumber\\&\leq Ct^{\frac{1}{r}}( \|\Delta_{-1}f_{0}\|_{L^{p}}+t^{1-\frac{1}{r}}
\|\Delta_{-1}g \|_{L_{t}^{r}L^{p}}).
\end{align}
Summing up \eqref{cftghy2} and \eqref{cftghy4} leads to
\begin{eqnarray}
\|f\|_{\widetilde{L}_{t}^{r}B_{p,r}^{\sigma}} \leq C(1+t)^{\frac{1}{r}}\|f_{0}\|_{B_{p,r}^{\sigma-{\frac{\gamma}{r}}}}+ C(1+t)\|g\|_{\widetilde{L}_{t}^{r}B_{p,r}^{\sigma-\gamma}}.\nonumber
\end{eqnarray}
Due to the fact $\widetilde{L}_{{t}}^{r}B_{p,r}^{s}\thickapprox {L}_{t}^{r}B_{p,r}^{s}$, we further get
\begin{eqnarray}
\|f\|_{ {L}_{t}^{r}B_{p,r}^{\sigma}} \leq C(1+t)^{\frac{1}{r}}\|f_{0}\|_{B_{p,r}^{\sigma-{\frac{\gamma}{r}}}}+ C(1+t)\|g\|_{ {L}_{t}^{r}B_{p,r}^{\sigma-\gamma}}.\nonumber
\end{eqnarray}
which is the desired estimate \eqref{adscvtyeq2}. Thus, we complete the proof of Lemma \ref{fraest01}.
\end{proof}

\vskip .1in
With Lemma \ref{fraest01} at our disposal, we are ready to prove Lemma \ref{cale22}.
\begin{proof}[{Proof of Lemma \ref{cale22}}]
Due to $\nabla\cdot u=0$, we rewrite $(\ref{casTCM})_{1}$ as
\begin{eqnarray}\label{ttkl004}
\partial_{t}u+\Lambda^{2\alpha} u=-\left(\mathbb{I}+(-\Delta)^{-1}\nabla\nabla\cdot\right)\nabla\cdot(v\otimes v+u\otimes u),\quad u(x,0)=u_{0}(x),
\end{eqnarray}
where $\mathbb{I}$ is the second order identity matrix.
Applying \eqref{scvtyeq2} to \eqref{ttkl004}, it follows that for any $p\in (1,\,\infty)$
\begin{align}\label{ttkl005}
\|u\|_{L_{t}^{1}B_{p,1}^{1+\frac{2}{p}}} \leq & C(1+t)\|u_{0}\|_{B_{p,1}^{1+\frac{2}{p}-2\alpha}}\nonumber\\&+ C(1+t)\|\left(\mathbb{I}+(-\Delta)^{-1}\nabla\nabla\cdot\right)\nabla\cdot(v\otimes v+u\otimes u)\|_{L_{t}^{1}B_{p,1}^{1+\frac{2}{p}-2\alpha}}\nonumber\\
\leq & C(t,u_{0}) + C(1+t)\|\nabla\cdot(v\otimes v+u\otimes u)\|_{L_{t}^{1}B_{p,1}^{1+\frac{2}{p}-2\alpha}}\nonumber\\
\leq & C(t,u_{0}) + C(1+t)\|v\otimes v+u\otimes u\|_{L_{t}^{1}B_{p,1}^{2+\frac{2}{p}-2\alpha}}\nonumber\\
\leq & C(t,u_{0}) + C(1+t)\|vv\|_{L_{t}^{1}B_{p,1}^{2+\frac{2}{p}-2\alpha}}
+ C(1+t)\|u u\|_{L_{t}^{1}B_{p,1}^{2+\frac{2}{p}-2\alpha}},
\end{align}
where we have used the boundedness of the Resiz type operator between the Besov spaces, namely,
$$\|(-\Delta)^{-1}\nabla\nabla h\|_{B_{p,q}^{s}}\leq C\| h\|_{B_{p,q}^{s}}$$
for any $p\in (1,\,\infty)$, $s\in \mathbb{R}$ and $q\geq1$. Now taking $r\in[1,\,\min\{p,\ \frac{1}{\alpha-1}\})$, we get by the embedding inequality \eqref{cvfEmd01} and the bilinear estimate \eqref{xcrty12} that
\begin{align}
C(1+t)\|vv\|_{L_{t}^{1}B_{p,1}^{2+\frac{2}{p}-2\alpha}} \leq& C(1+t)\|vv\|_{L_{t}^{1}B_{r,1}^{2+\frac{2}{r}-2\alpha}}\nonumber\\
\leq& C(1+t)\|v\|_{L_{t}^{2}B_{r_{1},2}^{-\delta}}\|v\|_{L_{t}^{2}
B_{r_{2},2}^{2+\frac{2}{r}-2\alpha+\delta}}
\nonumber\\
\leq& C(1+t)\|v\|_{L_{t}^{2}H^{\beta}}^{2},\nonumber
\end{align}
where $\delta>0$, $r_{1}\geq2$ and $r_{2}\geq2$ satisfy
$$\frac{1}{r_{1}}+\frac{1}{r_{2}}=\frac{1}{r},\quad-\delta-\frac{2}{r_{1}}\leq\beta-1,\quad 2+\frac{2}{r}-2\alpha+\delta-\frac{2}{r_{2}}\leq\beta-1$$
or
\begin{align}
1-\beta-\frac{2}{r_{1}}\leq \delta\leq 2\alpha+\beta-3-\frac{2}{r_{1}}.\nonumber
\end{align}
If the above $\delta$ would work, then the following restrictions should hold true $$\alpha+\beta\geq2,\ 1<\alpha<2;$$
$$1\geq\frac{1}{r}=\frac{1}{r_{1}}+\frac{1}{r_{2}}>\max\left\{\alpha-1,\ \frac{1}{p}\right\},\qquad \frac{1}{r_{1}}<\frac{2\alpha+\beta-3}{2}.$$
Direct computations show that due to $\alpha+\beta\geq2$ with $1<\alpha<2$, all the above parameters $\delta$, $r_{1}$, $r_{2}$ and $r$ can be fixed. For example, considering the case $\alpha+\beta=2$ with $1<\alpha<2$, we first take some $p>\frac{1}{\alpha-1}$, then we may choose $\delta$, $r_{1}$, $r_{2}$ and $r$ as
\begin{equation}
\begin{split}
& \delta=\frac{2\alpha-2-\max\{3\alpha-4,\,0\}}{4}; \\
& r_{1}=\frac{8}{2\alpha-2+\max\{3\alpha-4,\,0\}};\\
& r_{2}=\frac{4}{\alpha};\\
& r=\frac{8}{4\alpha-2+\max\{3\alpha-4,\,0\}}.\nonumber
\end{split}
\end{equation}
Therefore, we derive
\begin{align}\label{tdfty9}
C(1+t)\|vv\|_{L_{t}^{1}B_{p,1}^{2+\frac{2}{p}-2\alpha}}
\leq  C(1+t)\|v\|_{L_{t}^{2}H^{\beta}}^{2}.
\end{align}
Similarly, taking $\widetilde{r}\in[1,\,\min\{p,\ \frac{1}{\alpha-1}\})$, we have
\begin{align}
C(1+t)\|uu\|_{L_{t}^{1}B_{p,1}^{2+\frac{2}{p}-2\alpha}} \leq& C(1+t)\|uu\|_{L_{t}^{1}B_{\widetilde{r},1}^{2+\frac{2}{\widetilde{r}}-2\alpha}}\nonumber\\
\leq& C(1+t)\|u\|_{L_{t}^{2}B_{\widetilde{r}_{1},2}^{-\widetilde{\delta}}}\|u\|_{L_{t}^{2}
B_{r_{2},2}^{2+\frac{2}{\widetilde{r}}-2\alpha+\widetilde{\delta}}}
\nonumber\\
\leq& C(1+t)\|u\|_{L_{t}^{2}H^{\alpha}}^{2},\nonumber
\end{align}
where $\widetilde{\delta}>0$, $\widetilde{r}_{1}\geq2$ and $\widetilde{r}_{2}\geq2$ satisfy
$$\frac{1}{\widetilde{r}_{1}}+\frac{1}{\widetilde{r}_{2}}=\frac{1}{\widetilde{r}},
\quad-\widetilde{\delta}-\frac{2}{\widetilde{r}_{1}}\leq \alpha-1,\quad 2+\frac{2}{\widetilde{r}}-2\alpha+\widetilde{\delta}-\frac{2}{\widetilde{r}_{2}}\leq \alpha-1$$
or
\begin{align}
1-\alpha-\frac{2}{\widetilde{r}_{1}}\leq \widetilde{\delta}\leq 3\alpha-3-\frac{2}{\widetilde{r}_{1}}.\nonumber
\end{align}
Moreover, thanks to $1<\alpha<2$, we can also show that all the above parameters $\widetilde{\delta}$, $\widetilde{r}_{1}$, $\widetilde{r}_{2}$ and $\widetilde{r}$ can be fixed. In fact, we also take some $p>\frac{1}{\alpha-1}$, then we may choose $\widetilde{\delta}$, $\widetilde{r}_{1}$, $\widetilde{r}_{2}$ and $\widetilde{r}$ as follows
\begin{equation}
\begin{split}
& \widetilde{\delta}=\frac{6\alpha-6-\min\{3\alpha-3,\,1\}-\max\{2\alpha-3,\,0\}}{2}; \\
& \widetilde{r}_{1}=\frac{4}{\min\{3\alpha-3,\,1\}+\max\{2\alpha-3,\,0\}};\\
& \widetilde{r}_{2}=2;\\
& \widetilde{r}=\frac{4}{2+\min\{3\alpha-3,\,1\}+\max\{2\alpha-3,\,0\}}.\nonumber
\end{split}
\end{equation}
As a result, we obtain
\begin{align}\label{tdfty10}
C(1+t)\|uu\|_{L_{t}^{1}B_{p,1}^{2+\frac{2}{p}-2\alpha}}
\leq  C(1+t)\|u\|_{L_{t}^{2}H^{\alpha}}^{2}.
\end{align}
Putting \eqref{tdfty9} and \eqref{tdfty10} into \eqref{ttkl005} yields for $p\in (1,\,\infty)$ that
$$\|u\|_{L_{t}^{1}B_{p,1}^{1+\frac{2}{p}}}\leq C(t,u_{0})+ C(1+t)\|v\|_{L_{t}^{2}H^{\beta}}^{2}+C(1+t)\|u\|_{L_{t}^{2}H^{\alpha}}^{2}\leq C(t,u_{0},v_{0},\theta_{0}).$$
By the simple embedding inequality, we arrive at
\begin{align}
\int_{0}^{t}{
\|\nabla u(\tau)\|_{L^{\infty}}\,d\tau}\leq \int_{0}^{t}{
\|u(\tau)\|_{B_{p,1}^{1+\frac{2}{p}}}\,d\tau}\leq C(t,u_0, v_0, \theta_0).\nonumber
\end{align}
Thus, we complete the proof of Lemma \ref{cale22}.
\end{proof}

\vskip .1in
With \eqref{ttkl002} in hand, we are ready to show the global $H^1$-estimate.
\begin{lemma}\label{cale23}
Assume that $(u_{0},v_{0},\theta_{0})$ satisfies the conditions stated in Theorem \ref{Th1}. If $\alpha+\beta\geq2$ with $1<\alpha<2$,
then for any corresponding smooth solution $(u, v, \theta)$ of  (\ref{casTCM}), we have, for any $t>0$,
\begin{align}\label{ttkl007}
&\|\nabla u(t)\|_{L^{2}}^{2}+\|\nabla v(t)\|_{L^{2}}^{2}+\|\nabla\theta(t)\|_{L^{2}}^{2}+ \int_{0}^{t}{
(\|\Lambda^{\alpha}
\nabla u(\tau)\|_{L^{2}}^{2}+\|\Lambda^{\beta}\nabla v(\tau)\|_{L^{2}}^{2})\,d\tau}\nonumber\\&\leq C(t,u_0, v_0, \theta_0).
\end{align}
\end{lemma}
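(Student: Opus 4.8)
The plan is to run a $\dot H^{1}$ energy estimate on the full system \eqref{casTCM}; the algebraic structure of the nonlinearity (the $L^{2}$‑type cancellation already used in Lemma \ref{cale21}) will reduce it to a single quadratic term, which is then absorbed using the dissipation and \eqref{ttkl002}. Working with the smooth solution provided by the local theory, I would apply $\nabla$ to $\eqref{casTCM}_{1}$, $\eqref{casTCM}_{2}$, $\eqref{casTCM}_{3}$, take the $L^{2}$ inner product with $\nabla u$, $\nabla v$, $\nabla\theta$ respectively, and add. The pressure term drops because $\nabla\cdot u=0$, and one more integration by parts (as in Lemma \ref{cale21}, one derivative higher) shows that the two integrals produced by $\nabla\theta$ in $\eqref{casTCM}_{2}$ and by $\nabla\cdot v$ in $\eqref{casTCM}_{3}$ cancel. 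This leaves
\[
\frac{1}{2}\frac{d}{dt}\big(\|\nabla u\|_{L^{2}}^{2}+\|\nabla v\|_{L^{2}}^{2}+\|\nabla\theta\|_{L^{2}}^{2}\big)+\|\Lambda^{\alpha}\nabla u\|_{L^{2}}^{2}+\|\Lambda^{\beta}\nabla v\|_{L^{2}}^{2}=\mathcal{T}+\mathcal{Q},
\]
where $\mathcal{T}$ collects the contributions of the transport terms $(u\cdot\nabla)u$, $(u\cdot\nabla)v$, $(u\cdot\nabla)\theta$, and $\mathcal{Q}$ those of $\nabla\cdot(v\otimes v)$ (from $\eqref{casTCM}_{1}$) and of $(v\cdot\nabla)u$ (from $\eqref{casTCM}_{2}$).

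Because $\nabla\cdot u=0$, the standard product rule gives $|\mathcal{T}|\le C\|\nabla u\|_{L^{\infty}}\big(\|\nabla u\|_{L^{2}}^{2}+\|\nabla v\|_{L^{2}}^{2}+\|\nabla\theta\|_{L^{2}}^{2}\big)$, which is time integrable by \eqref{ttkl002}. For $\mathcal{Q}$, I would integrate by parts to move one derivative off $v\otimes v$; the leading parts of the two contributions then cancel against each other, this using only the algebraic identity $\int\nabla\cdot(v\otimes v)\cdot w\,dx+\int (v\cdot\nabla)w\cdot v\,dx=0$, valid for every vector field $w$ with no assumption on $\nabla\cdot v$, which is precisely the mechanism behind Lemma \ref{cale21}. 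What survives, apart from a remainder bounded by $C\|\nabla u\|_{L^{\infty}}\|\nabla v\|_{L^{2}}^{2}$, is the single term
\[
K:=\int_{\mathbb{R}^{2}}\partial_{l}v_{j}\, v_{k}\,\partial_{j}\partial_{l}u_{k}\,dx
\]
(summation over repeated indices). Thus everything reduces to estimating $K$, and this is the main obstacle: the interaction is MHD‑like, but since $v$ is not divergence free it cannot be discarded, while the dissipation on $v$ may be weak ($\beta<1$ is the new case), so one cannot afford $\|\nabla^{2}v\|_{L^{2}}$ or $\|v\|_{L^{\infty}}$.

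The estimate of $K$ is exactly where $1<\alpha<2$ and $\alpha+\beta\ge2$ are consumed. By Hölder's inequality with the exponents $\big(2,\ \tfrac{2}{\alpha-1},\ \tfrac{2}{2-\alpha}\big)$, admissible precisely because $1<\alpha<2$,
\[
|K|\le\|\nabla v\|_{L^{2}}\,\|v\|_{L^{2/(\alpha-1)}}\,\|\nabla^{2}u\|_{L^{2/(2-\alpha)}};
\]
the scaling‑critical Sobolev embeddings in $\mathbb{R}^{2}$ then give $\|v\|_{L^{2/(\alpha-1)}}\le C\|\Lambda^{2-\alpha}v\|_{L^{2}}$ and $\|\nabla^{2}u\|_{L^{2/(2-\alpha)}}\le C\|\Lambda^{\alpha}\nabla u\|_{L^{2}}$ (these are not endpoint cases, the Lebesgue exponents being finite), and the interpolation inequality gives $\|\Lambda^{2-\alpha}v\|_{L^{2}}\le C\|v\|_{L^{2}}^{1-\frac{2-\alpha}{\beta}}\|\Lambda^{\beta}v\|_{L^{2}}^{\frac{2-\alpha}{\beta}}$, which requires exactly $2-\alpha\le\beta$, i.e. $\alpha+\beta\ge2$. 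Combining these with Young's inequality and the global $L^{2}$‑bounds of Lemma \ref{cale21},
\[
|K|\le\tfrac12\|\Lambda^{\alpha}\nabla u\|_{L^{2}}^{2}+C(u_{0},v_{0},\theta_{0})\,\|\Lambda^{\beta}v\|_{L^{2}}^{\frac{2(2-\alpha)}{\beta}}\,\|\nabla v\|_{L^{2}}^{2},
\]
where the exponent $\tfrac{2(2-\alpha)}{\beta}\le2$, again by $\alpha+\beta\ge2$, makes $\|\Lambda^{\beta}v\|_{L^{2}}^{2(2-\alpha)/\beta}$ lie in $L^{1}(0,t)$ by Lemma \ref{cale21}. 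Inserting this into the differential inequality, absorbing $\tfrac12\|\Lambda^{\alpha}\nabla u\|_{L^{2}}^{2}$, and applying Grönwall's inequality with the coefficient $C\|\nabla u\|_{L^{\infty}}+C(u_{0},v_{0},\theta_{0})\|\Lambda^{\beta}v\|_{L^{2}}^{2(2-\alpha)/\beta}$, which is in $L^{1}(0,t)$ by \eqref{ttkl002} and Lemma \ref{cale21}, would yield \eqref{ttkl007}.
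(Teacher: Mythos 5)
Your proposal is correct and follows essentially the same route as the paper: an $H^{1}$ energy estimate in which the transport terms are controlled by $\|\nabla u\|_{L^{\infty}}$ (integrable by \eqref{ttkl002}), and the surviving coupling term of the form $\int |v|\,|\nabla v|\,|\nabla^{2}u|\,dx$ is handled by H\"older with exponents $\bigl(2,\tfrac{2}{\alpha-1},\tfrac{2}{2-\alpha}\bigr)$, Sobolev embedding, and absorption into $\|\Lambda^{\alpha}\nabla u\|_{L^{2}}^{2}$, with the leftover coefficient integrable via Lemma \ref{cale21} since $2-\alpha\le\beta$. The only cosmetic difference is that the paper bounds $\|v\|_{L^{2/(\alpha-1)}}$ crudely by $\|v\|_{H^{\beta}}$ rather than by the interpolated quantity $\|v\|_{L^{2}}^{1-\frac{2-\alpha}{\beta}}\|\Lambda^{\beta}v\|_{L^{2}}^{\frac{2-\alpha}{\beta}}$, and does not bother with the exact cancellation structure you describe (it simply estimates all terms of the types $|v|\,|\nabla v|\,|\nabla^{2}u|$ and $|\nabla v|^{2}|\nabla u|$ directly); both variants yield an $L^{1}(0,t)$ Gr\"onwall coefficient.
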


\begin{proof}
Taking the $L^{2}$ inner product of \eqref{casTCM} with $\Delta u$, $\Delta v$ and
$\Delta\theta$ respectively and adding them up, we get
\begin{align}\label{ttkl008}
&\frac{1}{2}\frac{d}{dt}(\|\nabla u(t)\|_{L^{2}}^{2}+\|\nabla v(t)\|_{L^{2}}^{2}
+\|\nabla\theta(t)\|_{L^{2}}^{2})+\|\Lambda^{\alpha}
\nabla u\|_{L^{2}}^{2}+\|\Lambda^{\beta}\nabla v\|_{L^{2}}^{2}\nonumber\\
 &=\int_{\mathbb{R}^{2}}{\Big(\nabla\cdot(v\otimes v)\cdot \Delta u
+ (v \cdot \nabla u)\cdot \Delta v\Big)\,dx}+\int_{\mathbb{R}^{2}}{ (u \cdot \nabla\theta)\cdot \Delta\theta\,dx}\nonumber\\& \quad
+\int_{\mathbb{R}^{2}}{ (u \cdot \nabla v)\cdot \Delta v\,dx}
\nonumber\\ &:=I_{1}+I_{2}+I_{3},
\end{align}
where we have used the identity
$$\int_{\mathbb{R}^{2}}{ (u \cdot \nabla u)\cdot \Delta u\,dx}=0.$$
Noticing $\alpha+\beta\geq2$ and using the Sobolev embedding inequalities, it yields
\begin{align}\label{ttkl009}
I_{1}&\leq C\int_{\mathbb{R}^{2}}{|v|\,|\nabla v|\,|\nabla^{2}u|\,dx} +C\int_{\mathbb{R}^{2}}{|\nabla v|\,|\nabla v|\,|\nabla u|\,dx}\nonumber\\&
\leq C\|v\|_{L^{\frac{2}{\alpha-1}}}\|\nabla v\|_{L^{2}}\|\nabla^{2}u\|_{L^{\frac{2}{2-\alpha}}}+C\|\nabla u\|_{L^{\infty}}\|\nabla v\|_{L^{2}}^{2}\nonumber\\&
\leq C\|v\|_{H^{\beta}}\|\nabla v\|_{L^{2}}\|\Lambda^{\alpha}\nabla u\|_{L^{2}}+C\|\nabla u\|_{L^{\infty}}\|\nabla v\|_{L^{2}}^{2}
\nonumber\\&
\leq \frac{1}{2}\|\Lambda^{\alpha}\nabla u\|_{L^{2}}^{2}+C\|v\|_{H^{\beta}}^{2}\|\nabla v\|_{L^{2}}^{2}+C\|\nabla u\|_{L^{\infty}}\|\nabla v\|_{L^{2}}^{2}.
\end{align}
Thanks to $\nabla\cdot u=0$, it is obvious to see that
\begin{align}\label{ttkl010}
I_{2}&\leq C\int_{\mathbb{R}^{2}}{|\nabla u|\,|\nabla \theta|^{2}\,dx} \leq C\|\nabla u\|_{L^{\infty}}\|\nabla \theta\|_{L^{2}}^{2},
\end{align}
\begin{align}\label{ttkl011}
I_{3}&\leq C\int_{\mathbb{R}^{2}}{|\nabla u|\,|\nabla v|^{2}\,dx} \leq C\|\nabla u\|_{L^{\infty}}\|\nabla v\|_{L^{2}}^{2}.
\end{align}
Substituting \eqref{ttkl009}, \eqref{ttkl010} and \eqref{ttkl011} into \eqref{ttkl008} gives
\begin{align*}
& \frac{d}{dt}(\|\nabla u(t)\|_{L^{2}}^{2}+\|\nabla v(t)\|_{L^{2}}^{2}
+\|\nabla\theta(t)\|_{L^{2}}^{2})+\|\Lambda^{\alpha}
\nabla u\|_{L^{2}}^{2}+\|\Lambda^{\beta}\nabla v\|_{L^{2}}^{2}\nonumber\\
 &\leq C(\|\nabla u\|_{L^{\infty}}+\|v\|_{H^{\beta}}^{2})(\|\nabla u\|_{L^{2}}^{2}+\|\nabla v\|_{L^{2}}^{2}
+\|\nabla\theta\|_{L^{2}}^{2}).
\end{align*}
Recalling \eqref{ttkl002}, we obtain the desired estimate \eqref{ttkl007} by using the Gronwall inequality. The proof of Lemma \ref{cale23} is thus completed.
\end{proof}

\vskip .1in
With the above estimates at our disposal, we are in a position to complete the proof of Theorem \ref{Th1}.
\begin{proof}[{Proof of Theorem \ref{Th1}}]
Applying $\Lambda^{s}$ with $s>2$ to the system (\ref{casTCM}),
taking the $L^{2}$ inner product with $\Lambda^{s}u$, $\Lambda^{s}v$ and
$\Lambda^{s}\theta$ respectively, and adding them up, we thus obtain
\begin{align}\label{ttkl012}
&\frac{1}{2}\frac{d}{dt}(\|\Lambda^{s}u(t)\|_{L^{2}}^{2}+\|\Lambda^{s}v(t)\|_{L^{2}}^{2}
+\|\Lambda^{s}\theta(t)\|_{L^{2}}^{2})+\|\Lambda^{s+\alpha}
u\|_{L^{2}}^{2}+\|\Lambda^{s+\beta}v\|_{L^{2}}^{2}\nonumber\\
&= -\int_{\mathbb{R}^{2}}{\Big(\Lambda^{s}\nabla\cdot(v\otimes v)\cdot \Lambda^{s} u
+\Lambda^{s}(v \cdot \nabla u)\cdot \Lambda^{s} v\Big)\,dx}-\int_{\mathbb{R}^{2}}{\Lambda^{s}(u \cdot \nabla\theta)\cdot \Lambda^{s} \theta\,dx}\nonumber\\& \quad
-\int_{\mathbb{R}^{2}}{ \Lambda^{s}(u \cdot \nabla v)\cdot \Lambda^{s} v\,dx}-\int_{\mathbb{R}^{2}}{ \Lambda^{s}(u \cdot \nabla u)\cdot \Lambda^{s}u\,dx}
\nonumber\\&:=J_{1}+J_{2}+J_{3}+J_{4},
\end{align}
where we have used the fact
$$\int_{\mathbb{R}^{2}}{\Lambda^{s}\nabla \theta\cdot \Lambda^{s}v
\,dx}+\int_{\mathbb{R}^{2}}{\Lambda^{s}(\nabla\cdot v)\Lambda^{s}\theta\,dx}=0.$$
In order to handle the terms at the right hand side of \eqref{ttkl012}, we will take advantage of the following commutator
estimates and bilinear estimates (see for example \cite{KP,Kenig})
\begin{eqnarray}\label{yfzdf68b1}
\|[\Lambda^{s},
f]g\|_{L^{p}}\leq C(\|\nabla
f\|_{L^{p_{1}}}\|\Lambda^{s-1}g\|_{L^{p_{2}}}+
\|\Lambda^{s}f\|_{L^{p_{3}}}\|g\|_{L^{p_{4}}}),
\end{eqnarray}
\begin{eqnarray} \label{yfzdf68b2}
\|\Lambda^{s}(f g)\|_{L^{p}}\leq C(\| f\|_{L^{p_{1}}}
\|\Lambda^{s}g\|_{L^{p_{2}}}+\|\Lambda^{s}f\|_{L^{p_{3}}}\|g\|_{L^{p_{4}}})
\end{eqnarray}
with $s>0,\,p_{2}, p_{3}\in(1, \infty)$ such that
$\frac{1}{p}=\frac{1}{p_{1}}+\frac{1}{p_{2}}=\frac{1}{p_{3}}+\frac{1}{p_{4}}$.
In some context, we also need the following variant version of (\ref{yfzdf68b1}), whose proof is the same one as for (\ref{yfzdf68b1})
\begin{eqnarray}\label{yfzdf68b3}
\|[\Lambda^{s-1}\partial_{i},f ]g \|_{L^r}
  \leq C\left(\|\nabla f \|_{L^{p_1}}\|\Lambda^{s-1} g \|_{L^{q_1}}
        +     \|\Lambda^{s} f \|_{L^{p_2}} \|  g \|_{L^{q_2}}\right).
        \end{eqnarray}
In view of \eqref{yfzdf68b1} and \eqref{yfzdf68b2}, it follows that
\begin{align}
J_{1} \leq& C \|\Lambda^{s}(v\otimes v)\|_{L^{2}}\|\Lambda^{s+1}u\|_{L^{2}}+C \|\Lambda^{s} (v\cdot\nabla u)\|_{L^{2}}\|\Lambda^{s}v\|_{L^{2}}\nonumber\\  \leq& C\|v\|_{L^{\infty}}\|\Lambda^{s}v\|_{L^{2}}\|\Lambda^{s+1}u\|_{L^{2}}
+C(\|\nabla u\|_{L^{\infty}}\|\Lambda^{s}v\|_{L^{2}}+\|\Lambda^{s}\nabla u\|_{L^{2}}\|v\|_{L^{\infty}})\|\Lambda^{s}v\|_{L^{2}}\nonumber\\ \leq& C\|v\|_{H^{\beta+1}}\|\Lambda^{s}v\|_{L^{2}}(\|\Lambda^{s}u\|_{L^{2}}
+\|\Lambda^{s+\alpha}u\|_{L^{2}})
+C\|\nabla u\|_{L^{\infty}}\|\Lambda^{s}v\|_{L^{2}}^{2}\nonumber\\ \leq& \frac{1}{8}\|\Lambda^{s+\alpha}u\|_{L^{2}}^{2}+C(1+\|v\|_{H^{\beta+1}}^{2}+\|\nabla u\|_{L^{\infty}})(\|\Lambda^{s}u\|_{L^{2}}^{2}+\|\Lambda^{s}v\|_{L^{2}}^{2}).\nonumber
\end{align}
By \eqref{yfzdf68b3}, we have
\begin{align}
J_{2} \leq& C\|[\Lambda^{s}\partial_{i}, u_{i}]\theta\|_{L^{2}}\|\Lambda^{s}\theta\|_{L^{2}}\nonumber\\
 \leq& C(\|\nabla u\|_{L^{\infty}}\|\Lambda^{s}\theta\|_{L^{2}}+\| \theta\|_{L^{\frac{2}{\alpha-1}}}\|\Lambda^{s+1} u\|_{L^{\frac{2}{2-\alpha}}})\|\Lambda^{s}\theta\|_{L^{2}}
\nonumber\\
 \leq& C\|\nabla u\|_{L^{\infty}}\|\Lambda^{s}\theta\|_{L^{2}}^{2}+C\| \theta\|_{L^{\frac{2}{\alpha-1}}}\|\Lambda^{s+\alpha} u\|_{L^{2}}\|\Lambda^{s}\theta\|_{L^{2}}
\nonumber\\
 \leq& \frac{1}{8}\|\Lambda^{s+\alpha}u\|_{L^{2}}^{2}+ C(\|\nabla u\|_{L^{\infty}}+\| \theta\|_{L^{\frac{2}{\alpha-1}}}^{2})\|\Lambda^{s}\theta\|_{L^{2}}^{2}.\nonumber
\end{align}
By the similar arguments, one derives
\begin{align}
J_{3} \leq& C\|[\Lambda^{s}\partial_{i}, u_{i}]v\|_{L^{2}}\|\Lambda^{s}v\|_{L^{2}}
 \nonumber\\
 \leq&
C(\|\nabla u\|_{L^{\infty}}\|\Lambda^{s}v\|_{L^{2}}+\| v\|_{L^{\frac{2}{\alpha-1}}}\|\Lambda^{s+1} u\|_{L^{\frac{2}{2-\alpha}}})\|\Lambda^{s}v\|_{L^{2}}
\nonumber\\
 \leq& C\|\nabla u\|_{L^{\infty}}\|\Lambda^{s}v\|_{L^{2}}^{2}+C\| v\|_{L^{\frac{2}{\alpha-1}}}\|\Lambda^{s+\alpha} u\|_{L^{2}}\|\Lambda^{s}v\|_{L^{2}}
\nonumber\\
 \leq& \frac{1}{8}\|\Lambda^{s+\alpha}u\|_{L^{2}}^{2}+ C(\|\nabla u\|_{L^{\infty}}+\| v\|_{L^{\frac{2}{\alpha-1}}}^{2})\|\Lambda^{s}v\|_{L^{2}}^{2},
 \nonumber
\end{align}
\begin{align}
J_{4}  \leq& C \int_{\mathbb{R}^{2}}{|[\Lambda^{s}, u\cdot\nabla]u\cdot
\Lambda^{s}u|\,dx}
\nonumber\\ \leq&
C\|\Lambda^{s}u\|_{L^{2}}\|[\Lambda^{s}, u\cdot\nabla]u\|_{L^{2}}
\nonumber\\ \leq&
C\|\nabla u\|_{L^{\infty}}\|\Lambda^{s}u\|_{L^{2}}^{2}.\nonumber
\end{align}
Substituting all the preceding estimates into (\ref{ttkl012}), one can finally get
\begin{align} &
\frac{d}{dt}(\|\Lambda^{s}u(t)\|_{L^{2}}^{2}+\|\Lambda^{s}v(t)\|_{L^{2}}^{2}
+\|\Lambda^{s}\theta(t)\|_{L^{2}}^{2})+\|\Lambda^{s+\alpha}
u\|_{L^{2}}^{2}+\|\Lambda^{s+\beta}v\|_{L^{2}}^{2}\nonumber\\
 &\leq C(1+\|v\|_{H^{\beta+1}}^{2}+\|\nabla u\|_{L^{\infty}}+\| \theta\|_{L^{\frac{2}{\alpha-1}}}^{2}+\| v\|_{L^{\frac{2}{\alpha-1}}}^{2})(\|\Lambda^{s}u\|_{L^{2}}^{2}+
\|\Lambda^{s}v\|_{L^{2}}^{2}+
\|\Lambda^{s}\theta\|_{L^{2}}^{2})\nonumber\\
 &\leq C(1+\|v\|_{H^{\beta+1}}^{2}+\| \theta\|_{H^{1}}^{2}+\|\nabla u\|_{L^{\infty}})(\|\Lambda^{s}u\|_{L^{2}}^{2}+
\|\Lambda^{s}v\|_{L^{2}}^{2}+
\|\Lambda^{s}\theta\|_{L^{2}}^{2}),\nonumber
\end{align}
which along with \eqref{ttkl002}, \eqref{ttkl007} and the Gronwall inequality yields
\begin{align} & \|\Lambda^{s}u(t)\|_{L^{2}}^{2}+\|\Lambda^{s}v(t)\|_{L^{2}}^{2}
+\|\Lambda^{s}\theta(t)\|_{L^{2}}^{2}\nonumber\\&\quad +\int_{0}^{t}{(\|\Lambda^{s+\alpha}
u(\tau)\|_{L^{2}}^{2}+\|\Lambda^{s+\beta}v(\tau)\|_{L^{2}}^{2})\,d\tau}<\infty.
\nonumber
\end{align}
Therefore, this completes the proof of Theorem \ref{Th1}.
\end{proof}

\vskip .3in
\section{The proof of Theorem \ref{Th2}}\setcounter{equation}{0}
This section is devoted to the proof of Theorem \ref{Th2}.
Similar to Lemma \ref{cale21}, we also have the basic global $L^2$-bound.
\begin{lemma}\label{yetle31}
Assume $(u_{0},v_{0},\theta_{0})$ satisfies the conditions stated in Theorem \ref{Th2}.
Then for any corresponding smooth solution $(u, v, \theta)$ of  (\ref{logcasTCM}), we have, for any $t>0$,
\begin{align}\label{tyht301}
\|u(t)\|_{L^{2}}^{2}+\|v(t)\|_{L^{2}}^{2}+\|\theta(t)\|_{L^{2}}^{2}+ \int_{0}^{t}{
 \|\mathcal{L}
u(\tau)\|_{L^{2}}^{2} \,d\tau}\leq C(u_0, v_0, \theta_0).
\end{align}
\end{lemma}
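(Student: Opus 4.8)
The plan is to mimic exactly the energy argument used for Lemma \ref{cale21}, since the structural cancellations of the tropical climate system are unaffected by replacing $(-\Delta)^{\alpha}u$ with $\mathcal{L}^{2}u$. First I would take the $L^{2}$ inner product of $\eqref{logcasTCM}_{1}$ with $u$, of $\eqref{logcasTCM}_{2}$ with $v$, and of $\eqref{logcasTCM}_{3}$ with $\theta$, then add the three identities. The transport terms $(u\cdot\nabla)u$, $(u\cdot\nabla)v$, $(u\cdot\nabla)\theta$ all vanish upon integration by parts using $\nabla\cdot u=0$, and the pressure term $\nabla p$ tested against $u$ vanishes for the same reason. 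The coupling terms cancel in pairs exactly as in Lemma \ref{cale21}: one has
\begin{align}
\int_{\mathbb{R}^{2}}{\nabla\cdot(v\otimes v)\cdot u\,dx}+\int_{\mathbb{R}^{2}}{(v\cdot\nabla)u\cdot v\,dx}=0,\nonumber
\end{align}
since $\int \nabla\cdot(v\otimes v)\cdot u\,dx = -\int (v\otimes v):\nabla u\,dx = -\int v_{j}v_{i}\partial_{j}u_{i}\,dx = -\int (v\cdot\nabla)u\cdot v\,dx$, and similarly
\begin{align}
\int_{\mathbb{R}^{2}}{\nabla\theta\cdot v\,dx}+\int_{\mathbb{R}^{2}}{(\nabla\cdot v)\,\theta\,dx}=0\nonumber
\end{align}
by a single integration by parts. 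This leaves only the dissipative contribution from $\mathcal{L}^{2}u$, which is nonnegative: since $\mathcal{L}$ is a Fourier multiplier with real symbol $|\xi|^{2}/g(\xi)$, one has $\int \mathcal{L}^{2}u\cdot u\,dx = \|\mathcal{L}u\|_{L^{2}}^{2}$ by Plancherel.

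Assembling these, I would obtain the differential identity
\begin{align}
\frac{1}{2}\frac{d}{dt}\big(\|u(t)\|_{L^{2}}^{2}+\|v(t)\|_{L^{2}}^{2}+\|\theta(t)\|_{L^{2}}^{2}\big)+\|\mathcal{L}u\|_{L^{2}}^{2}=0,\nonumber
\end{align}
which is the precise analogue of \eqref{ttkl001} with $\|\Lambda^{\alpha}u\|_{L^{2}}^{2}+\|\Lambda^{\beta}v\|_{L^{2}}^{2}$ replaced by $\|\mathcal{L}u\|_{L^{2}}^{2}$ (there is simply no dissipative term from the $v$-equation now, since $\beta=0$). Integrating in time from $0$ to $t$ yields
\begin{align}
\|u(t)\|_{L^{2}}^{2}+\|v(t)\|_{L^{2}}^{2}+\|\theta(t)\|_{L^{2}}^{2}+2\int_{0}^{t}\|\mathcal{L}u(\tau)\|_{L^{2}}^{2}\,d\tau = \|u_{0}\|_{L^{2}}^{2}+\|v_{0}\|_{L^{2}}^{2}+\|\theta_{0}\|_{L^{2}}^{2},\nonumber
\end{align}
which is exactly \eqref{tyht301}.

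There is essentially no obstacle here — the argument is a routine energy estimate — but the one point requiring a small amount of care is justifying that $\int \mathcal{L}^{2}u\cdot u\,dx = \|\mathcal{L}u\|_{L^{2}}^{2}$ and that the formal integrations by parts are legitimate. For the smooth solutions under consideration (local existence gives $(u,v,\theta)\in C([0,T];H^{s})$ with $s>2$), all the spatial integrals are finite and the Fourier-side computation for the multiplier $\mathcal{L}$ is immediate, so this causes no difficulty; it is the same level of rigor already taken for granted in Lemma \ref{cale21}. Hence I would simply remark that the proof is identical to that of Lemma \ref{cale21} upon replacing the dissipation operators, and record the above cancellation identities for completeness.
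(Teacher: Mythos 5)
Your proposal is correct and coincides with the paper's intended argument: the paper states Lemma \ref{yetle31} without proof, noting only that it is ``similar to Lemma \ref{cale21},'' and your energy estimate with the two cancellation identities plus the Plancherel identity $\int \mathcal{L}^{2}u\cdot u\,dx=\|\mathcal{L}u\|_{L^{2}}^{2}$ is exactly that argument carried out explicitly.
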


\vskip .1in
With the help of \eqref{tyht301}, we are able to derive the following key estimates.
\begin{lemma}\label{yetle32}
Assume $(u_{0},v_{0},\theta_{0})$ satisfies the conditions stated in Theorem \ref{Th2}. Then for any corresponding smooth solution $(u, v, \theta)$ of  (\ref{logcasTCM}), we have, for any $t>0$ and any $\sigma\in(0,\,1)$
\begin{eqnarray}\label{tyht302}
\|\Lambda^{\sigma}u(t)\|_{L^{2}}^{2}+\|\Lambda^{\sigma}v(t)\|_{L^{2}}^{2}
+\|\Lambda^{\sigma}\theta(t)\|_{L^{2}}^{2}+\int_{0}^{t}{\|\mathcal{L}\Lambda^{\sigma} u(\tau)\|_{L^{2}}^{2}\,d\tau}\leq C(t,u_0, v_0, \theta_0).
\end{eqnarray}
In particular, it holds
\begin{eqnarray}\label{tyht303}
\int_{0}^{t}{\|\nabla u(\tau)\|_{L^{\infty}}\,d\tau}\leq C(t,u_0, v_0, \theta_0).
\end{eqnarray}
\end{lemma}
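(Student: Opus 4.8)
The plan is to run an energy estimate at the level $\Lambda^{\sigma}$ with $\sigma\in(0,1)$ fixed, exploiting the cancellations already used in Lemma \ref{yetle31} together with commutator estimates, and then to close the resulting differential inequality by a logarithmic Gronwall argument fueled by the basic bound \eqref{exstp14}. First I would apply $\Lambda^{\sigma}$ to each equation of \eqref{logcasTCM}, take the $L^2$ inner product with $\Lambda^{\sigma}u$, $\Lambda^{\sigma}v$, $\Lambda^{\sigma}\theta$ respectively, and add. The dissipative term contributes $Y(t)=\|\mathcal{L}\Lambda^{\sigma}u\|_{L^2}^2$; the coupling terms $\nabla\theta$ in the $v$-equation and $\nabla\cdot v$ in the $\theta$-equation cancel as before (now at the $\Lambda^{\sigma}$ level), and so does $\int \Lambda^{\sigma}(u\cdot\nabla u)\cdot\Lambda^{\sigma}u\,dx$ after using commutators and $\nabla\cdot u=0$. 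Writing $X(t)=\|\Lambda^{\sigma}u\|_{L^2}^2+\|\Lambda^{\sigma}v\|_{L^2}^2+\|\Lambda^{\sigma}\theta\|_{L^2}^2$, the transport-type terms $[\Lambda^{\sigma},u\cdot\nabla]$ acting on $u,v,\theta$, the stretching term $(v\cdot\nabla)u$, and $\nabla\cdot(v\otimes v)$ all get estimated by Kato--Ponce / Kenig--Ponce--Vega commutator and product inequalities. The key point is that since $\sigma<1$, every term can be bounded by $C(\|\nabla u\|_{L^{\infty}}+\|\Delta u\|_{L^2})X(t)$, which is precisely the displayed inequality \eqref{tyht305}; the term $\|\Delta u\|_{L^2}$ arises from the pieces where a full derivative lands on $u$ (e.g. $\|\Lambda^{\sigma}(v\otimes v)\|_{L^2}\|\Lambda^{\sigma+1}u\|_{L^2}$ and the $v\cdot\nabla u$ stretching term), after interpolating $\|\Lambda^{\sigma}v\|_{L^2}\|\Lambda^{\sigma+1}u\|_{L^2}\lesssim X(t)+\|\Delta u\|_{L^2}X(t)$ using $\sigma+1<2$.

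Next I would convert the right-hand side into something controllable by \eqref{exstp14}. Using the Littlewood-Paley decomposition $u=\sum_j\Delta_j u$, one bounds $\|\nabla u\|_{L^{\infty}}+\|\Delta u\|_{L^2}$ by splitting into low frequencies ($2^j\lesssim N$), a middle/high range, and very high frequencies ($2^j\gtrsim N$). The low part is controlled by $\|u\|_{L^2}$; the high part is absorbed using the $H^{\sigma}$ bound, giving a gain of $N^{-\sigma'}$ for some $\sigma'>0$; and the crucial middle part is handled by inserting $g(\Lambda)/g(\Lambda)$: $\|\Delta_j\Lambda^2 u\|_{L^2}=g(2^j)\|\Delta_j\frac{\Lambda^2}{g(\Lambda)}u\|_{L^2}$, so that after summing one obtains a factor $\sqrt{\ln N}\,g(N)$ times $\|\frac{\Lambda^2}{g(\Lambda)}u\|_{L^2}$. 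Choosing $N$ as a suitable power of $(e+X(t))$ (this is where the constant $\kappa>0$ enters) yields
\begin{align*}
\frac{d}{dt}X(t)+Y(t)\leq C(e+X(t))\sqrt{\ln(e+X(t))}\,g\!\left((e+X(t))^{\frac{1}{\kappa}}\right)\left(1+\Big\|\tfrac{\Lambda^{2}}{g(\Lambda)}u\Big\|_{L^{2}}\right).
\end{align*}

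Then I would integrate this differential inequality. Setting $Z(t)=e+X(t)$ and separating variables, $\frac{dZ}{Z\sqrt{\ln Z}\,g(Z^{1/\kappa})}\leq C\big(1+\|\frac{\Lambda^2}{g(\Lambda)}u\|_{L^2}\big)\,dt$; the right side is integrable on $[0,t]$ by \eqref{exstp14} and Cauchy--Schwarz, while the left side, after the substitution $\tau=Z^{1/\kappa}$, becomes (a constant multiple of) $\int^{Z(t)^{1/\kappa}}\frac{d\tau}{\tau\sqrt{\ln\tau}\,g(\tau)}$, which diverges as its upper limit tends to infinity by hypothesis \eqref{logcobd}. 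Hence $Z(t)$ — and therefore $X(t)$ — stays finite on every $[0,T]$, and integrating once more recovers $\int_0^t Y(\tau)\,d\tau<\infty$; this is \eqref{tyht302} (i.e. \eqref{vftyu8}). Finally, \eqref{tyht303} follows by picking $\sigma$ close enough to $1$ that $H^{1+\sigma}\hookrightarrow W^{1,\infty}$ fails only logarithmically, and using the Littlewood-Paley / Brezis--Gallouet type bound $\|\nabla u\|_{L^{\infty}}\lesssim 1+\|\mathcal{L}\Lambda^{\sigma}u\|_{L^2}\sqrt{\ln(e+\|u\|_{H^{s}})}$ together with \eqref{tyht302}, so that $\int_0^t\|\nabla u\|_{L^{\infty}}\,d\tau$ is finite. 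The main obstacle is the middle-frequency bookkeeping in the second step: one must track precisely how the factor $\sqrt{\ln N}\,g(N)$ emerges and calibrate $N=N(X(t))$ so that the high-frequency remainder is genuinely absorbed and the surviving growth matches exactly the integrand in \eqref{logcobd}; everything else is a routine, if lengthy, commutator computation.
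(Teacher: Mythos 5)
Your proposal follows essentially the same route as the paper: the $\Lambda^{\sigma}$-level energy estimate closed by Kato--Ponce commutators into $\frac{d}{dt}X+Y\leq C(\|\nabla u\|_{L^{\infty}}+\|\Delta u\|_{L^{2}})X$, the three-way Littlewood--Paley splitting with the $g(\Lambda)/g(\Lambda)$ insertion producing the factor $\sqrt{N}\,g(2^{N})\|\mathcal{L}u\|_{L^{2}}$ in the middle range and a $Y^{1/2}$ contribution (absorbed by the dissipation) in the high range, the calibration $2^{N}\approx(e+X)^{1/\kappa}$, and the Osgood-type integration against \eqref{logcobd} and \eqref{tyht301}. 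The one step that would not survive as written is your derivation of \eqref{tyht303} via a Brezis--Gallouet bound with $\|u\|_{H^{s}}$ inside the logarithm: the global $H^{s}$ bound is not yet available at this stage (it is obtained from \eqref{tyht303}), so that argument is circular. The fix is the paper's direct route, which is already implicit in your frequency analysis: the lower bound $\|\mathcal{L}f\|_{L^{2}}^{2}\geq C_{1}\|\Lambda^{2-\gamma}f\|_{L^{2}}^{2}-C_{2}\|f\|_{L^{2}}^{2}$ gives $\|\Lambda^{\vartheta}u\|_{L^{2}}\leq C(\|u\|_{L^{2}}+\|\mathcal{L}\Lambda^{\sigma}u\|_{L^{2}})$ for any $\vartheta<\sigma+2$, and choosing $\vartheta\in(2,\sigma+2)$ yields $\|\nabla u\|_{L^{\infty}}\leq C(\|u\|_{L^{2}}+\|\Lambda^{\vartheta}u\|_{L^{2}})$ by Sobolev embedding, whose time integral is finite by \eqref{tyht302} and Cauchy--Schwarz.
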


\begin{proof}
Similar to \eqref{ttkl012}, we further have
\begin{align}\label{tyht304}
&\frac{1}{2}\frac{d}{dt}(\|\Lambda^{\sigma}u(t)\|_{L^{2}}^{2}+\|\Lambda^{\sigma}v(t)\|_{L^{2}}^{2}
+\|\Lambda^{\sigma}\theta(t)\|_{L^{2}}^{2})+\|\mathcal{L}\Lambda^{\sigma}
u\|_{L^{2}}^{2}\nonumber\\
&= -\int_{\mathbb{R}^{2}}{\Big(\Lambda^{\sigma}\nabla\cdot(v\otimes v)\cdot \Lambda^{\sigma} u
+\Lambda^{\sigma}(v \cdot \nabla u)\cdot \Lambda^{\sigma} v\Big)\,dx}-\int_{\mathbb{R}^{2}}{\Lambda^{\sigma}(u \cdot \nabla\theta)\cdot \Lambda^{\sigma} \theta\,dx}\nonumber\\& \quad
-\int_{\mathbb{R}^{2}}{ \Lambda^{\sigma}(u \cdot \nabla v)\cdot \Lambda^{\sigma} v\,dx}-\int_{\mathbb{R}^{2}}{ \Lambda^{\sigma}(u \cdot \nabla u)\cdot \Lambda^{\sigma}u\,dx}
\nonumber\\&:=\widetilde{J}_{1}+\widetilde{J}_{2}+\widetilde{J}_{3}+\widetilde{J}_{4}.
\end{align}
It follows from \eqref{yfzdf68b2} that
\begin{align}
\widetilde{J}_{1}&\leq C\|\Lambda^{\sigma} (vv)\|_{L^{\frac{2}{2-\sigma}}}\|\Lambda^{\sigma+1} u\|_{L^{\frac{2}{\sigma}}}
+C\|\Lambda^{\sigma}(v \cdot \nabla u)\|_{L^{2}}\|\Lambda^{\sigma} v\|_{L^{2}}\nonumber\\&\leq C\|v\|_{L^{\frac{2}{1-\sigma}}}\|\Lambda^{\sigma} v\|_{L^{2}}\|\Lambda^{\sigma+1} u\|_{L^{\frac{2}{\sigma}}}
+C\|\nabla u\|_{L^{\infty}}\|\Lambda^{\sigma}v\|_{L^{2}}\|\Lambda^{\sigma} v\|_{L^{2}}\nonumber\\&\leq C(\|\Delta u\|_{L^{2}}
+\|\nabla u\|_{L^{\infty}})\|\Lambda^{\sigma}v\|_{L^{2}}^{2}.\nonumber
\end{align}
Thanks to \eqref{yfzdf68b3}, we deduce
\begin{align}
\widetilde{J}_{2} \leq& C\|[\Lambda^{\sigma}\partial_{i}, u_{i}]\theta\|_{L^{2}}\|\Lambda^{\sigma}\theta\|_{L^{2}}\nonumber\\
 \leq& C(\|\nabla u\|_{L^{\infty}}\|\Lambda^{\sigma}\theta\|_{L^{2}}+\| \theta\|_{L^{\frac{2}{1-\sigma}}}\|\Lambda^{\sigma+1} u\|_{L^{\frac{2}{\sigma}}})\|\Lambda^{\sigma}\theta\|_{L^{2}}
\nonumber\\
 \leq& C(\|\Delta u\|_{L^{2}}
+\|\nabla u\|_{L^{\infty}})\|\Lambda^{\sigma}\theta\|_{L^{2}}^{2},\nonumber
\end{align}
\begin{align}
\widetilde{J}_{3} \leq& C\|[\Lambda^{\sigma}\partial_{i}, u_{i}]v\|_{L^{2}}\|\Lambda^{\sigma}v\|_{L^{2}}
 \nonumber\\
 \leq&
C(\|\nabla u\|_{L^{\infty}}\|\Lambda^{\sigma}v\|_{L^{2}}+\| v\|_{L^{\frac{2}{1-\sigma}}}\|\Lambda^{\sigma+1} u\|_{L^{\frac{2}{\sigma}}})\|\Lambda^{\sigma}v\|_{L^{2}}
\nonumber\\
 \leq& C(\|\Delta u\|_{L^{2}}
+\|\nabla u\|_{L^{\infty}})\|\Lambda^{\sigma}v\|_{L^{2}}^{2}.
 \nonumber
\end{align}
The last term can be easily bounded by
\begin{align}
\widetilde{J}_{4}  \leq&
C\|\Lambda^{\sigma}u\|_{L^{2}}\|[\Lambda^{\sigma}, u\cdot\nabla]u\|_{L^{2}}
\nonumber\\ \leq&
C\|\nabla u\|_{L^{\infty}}\|\Lambda^{\sigma}u\|_{L^{2}}^{2}.\nonumber
\end{align}
Putting the above estimates into \eqref{tyht304} gives
\begin{align}\label{tyht305}
& \frac{d}{dt}(\|\Lambda^{\sigma}u(t)\|_{L^{2}}^{2}+\|\Lambda^{\sigma}v(t)\|_{L^{2}}^{2}
+\|\Lambda^{\sigma}\theta(t)\|_{L^{2}}^{2})+\|\mathcal{L}\Lambda^{\sigma}
u\|_{L^{2}}^{2}
\nonumber\\&\leq
C(\|\Delta u\|_{L^{2}}
+\|\nabla u\|_{L^{\infty}})(\|\Lambda^{\sigma}u\|_{L^{2}}^{2}+\|\Lambda^{\sigma}v\|_{L^{2}}^{2}
+\|\Lambda^{\sigma}\theta\|_{L^{2}}^{2}).
\end{align}
Noticing the assumptions on $g$ (more precisely, $g$ grows logarithmically), we infer that for any fixed $\gamma>0$, there exists $N=N(\gamma)$ satisfying
$$g(r)\leq \widetilde{C}r^{\gamma},\quad \forall\,r\geq N$$
with the constant $\widetilde{C}=\widetilde{C}(\gamma)$.
Consequently, it follows that for any $0<\gamma<2$
\begin{align}\label{tyht306}
\|\mathcal{L}f\|_{L^{2}}^{2}&=
\int_{|\xi|< N(\gamma)}{ \frac{|\xi|^{4}}{g^{2}(|\xi|)}|\widehat{f}(\xi)|^{2}\,d\xi}
+\int_{|\xi|\geq N(\gamma)}{ \frac{|\xi|^{4}}{g^{2}(|\xi|)}|\widehat{f}(\xi)|^{2}\,d\xi}\nonumber\\
&\geq
\int_{|\xi|\geq N(\gamma)}{ \frac{|\xi|^{4}}{\big[\widetilde{C}|\xi|^{\gamma}
\big]^{2}}|\widehat{f}(\xi)|^{2}\,d\xi}\nonumber\\
&=
\int_{\mathbb{R}^{n}}{ \frac{|\xi|^{4}}{\big[\widetilde{C}|\xi|^{\gamma}
\big]^{2}}|\widehat{f}(\xi)|^{2}\,d\xi}-\int_{|\xi|<N(\gamma)}{ \frac{|\xi|^{4}}{\big[\widetilde{C}|\xi|^{\gamma}
\big]^{2}}|\widehat{f}(\xi)|^{2}\,d\xi}
\nonumber\\
&\geq  C_{1}\|\Lambda^{2-\gamma}f\|_{L^{2}}^{2}-{C_{2}}\|f\|_{L^{2}}^{2},
\end{align}
where $C_{1}$ and ${C_{2}}$ depend only on $\gamma$.
By the high-low frequency technique, we have
\begin{eqnarray}
\|\nabla u\|_{L^{\infty}}\leq \|\Delta_{-1}\nabla u\|_{L^{\infty}}+\sum_{l=0}^{N-1}\|\Delta_{l}\nabla u\|_{L^{\infty}}+\sum_{l=N}^{\infty}\|\Delta_{l}\nabla u\|_{L^{\infty}},\nonumber
\end{eqnarray}
where $\Delta_{l}$ ($l=-1,0,1,\cdot\cdot\cdot$) denote the frequency
operator (see Appendix for details).
By Lemma \ref{vfgty8xc}, one gets for $2-\sigma<\nu<2$
$$\|\Delta_{-1}\nabla u\|_{L^{\infty}}\leq C\|u\|_{L^{2}},$$
\begin{align}
\sum_{l=N}^{\infty}\|\Delta_{l}\nabla u\|_{L^{\infty}}&\leq
 C \sum_{l=N}^{\infty}2^{2l}\|\Delta_{l}u\|_{L^{2}}\nonumber\\&=
C \sum_{l=N}^{\infty}2^{l(2-\sigma-\nu)}
\|\Sigma_{l}\Lambda^{\sigma+\nu}u\|_{L^{2}}\nonumber\\&\leq  C2^{N(2-\sigma-\nu)}\|\Lambda^{\sigma+\nu}u\|_{L^{2}}\nonumber\\&\leq  C2^{N(2-\sigma-\nu)}(\|\Lambda^{\sigma}u\|_{L^{2}}+\|\mathcal{L} \Lambda^{\sigma} u\|_{L^{2}}),\nonumber
\end{align}
where in the last line we have used \eqref{tyht306}. From Lemma \ref{vfgty8xc} and the Plancherel theorem, we get
\begin{align}
\sum_{l=0}^{N-1}\|\Delta_{l}\nabla u\|_{L^{\infty}}&\leq C \sum_{l=0}^{N-1}2^{2l}\|\Delta_{l} u\|_{L^{2}}\leq C \sum_{l=0}^{N-1}\|\Delta_{l}\Delta u\|_{L^{2}}\nonumber\\
&\leq
C\sum_{l=0}^{N-1} \Big\|\varphi(2^{-l}\xi)
|\xi|^{2}\widehat{u}(\xi)\Big\|_{L^{2}}
\nonumber\\
&\leq  C\sum_{l=0}^{N-1} \Big\|\varphi(2^{-l}\xi)
g(|\xi|)
\frac{|\xi|^{2}}{ g(|\xi|) }\widehat{u}(\xi)\Big\|_{L^{2}}
\nonumber\\
&\leq  C\sum_{l=0}^{N-1} g(2^{l}) \Big\|\frac{|\xi|^{2}}{g(|\xi|) }\widehat{\Delta_{l}u}(\xi)\Big\|_{L^{2}}
\nonumber\\
&\leq C\Big(\sum_{l=0}^{N-1} g^{2}(2^{l})\Big)^{\frac{1}{2}}\left(\sum_{l=0}^{N-1}
\Big\|\frac{|\xi|^{2}}{ g (|\xi|)}\widehat{\Delta_{l}u}(\xi)\Big\|_{L^{2}}^{2}\right)^{\frac{1}{2}}
\nonumber\\
&\leq  Cg(2^{N})\Big(\sum_{l=1}^{N-1} 1\Big)^{\frac{1}{2}} \Big\|\frac{\Lambda^{2}}{ g(\Lambda)}u \Big\|_{L^{2}}
\nonumber\\
&\leq Cg (2^{N})\sqrt{N}\|\mathcal{L}u \|_{L^{2}}.\nonumber
\end{align}
As a result, we directly have
\begin{eqnarray}
\|\nabla u\|_{L^{\infty}}\leq C\|u\|_{L^{2}}+Cg (2^{N})\sqrt{N}\|\mathcal{L}u \|_{L^{2}}+C2^{N(2-\sigma-\nu)}(\|\Lambda^{\sigma}u\|_{L^{2}}+\|\mathcal{L} \Lambda^{\sigma} u\|_{L^{2}}).\nonumber
\end{eqnarray}
By the same arguments, we also obtain
\begin{eqnarray}
\|\Delta u\|_{L^{2}}\leq C\|u\|_{L^{2}}+Cg (2^{N})\sqrt{N}\|\mathcal{L}u \|_{L^{2}}+C2^{N(2-\sigma-\nu)}(\|\Lambda^{\sigma}u\|_{L^{2}}+\|\mathcal{L} \Lambda^{\sigma} u\|_{L^{2}}).\nonumber
\end{eqnarray}
For the sake of simplicity, we denote
$$X(t):=\|\Lambda^{\sigma}u(t)\|_{L^{2}}^{2}+\|\Lambda^{\sigma}v(t)\|_{L^{2}}^{2}
+\|\Lambda^{\sigma}\theta(t)\|_{L^{2}}^{2},\qquad Y(t):=\|\mathcal{L}\Lambda^{\sigma}
u(t)\|_{L^{2}}^{2}.$$
Then we deduce from \eqref{tyht305} that
$$\frac{d}{dt}X(t)+Y(t) \leq C\left(1+g (2^{N})\sqrt{N}\|\mathcal{L}u \|_{L^{2}}+2^{N(2-\sigma-\nu)}\big(X^{\frac{1}{2}}(t)+Y^{\frac{1}{2}}(t)\big)\right) X(t).$$
After choosing $N$ as
$$2^{N}\approx \left(e+X(t)\right)^{\frac{1}{\kappa}},\qquad \kappa:=2(\sigma+\nu-2)>0,$$
one concludes
\begin{align}
 \frac{d}{dt}X(t)+Y(t) &\leq C\left(1+g \left((e+X(t))^{\frac{1}{\kappa}}\right)\sqrt{\ln(e+X(t))}\|\mathcal{L}u \|_{L^{2}}\right) X(t) +CX^{\frac{1}{2}}(t)Y^{\frac{1}{2}}(t)\nonumber\\
 &\leq  \frac{1}{2}Y(t)+C\left(1+g \left((e+X(t))^{\frac{1}{\kappa}}\right)\sqrt{\ln(e+X(t))}\|\mathcal{L}u \|_{L^{2}}\right) X(t),\nonumber
\end{align}
which further leads to
\begin{eqnarray}\label{tyht307}
 \frac{d}{dt}X(t)+Y(t) \leq C(e+X(t))\sqrt{\ln(e+X(t))}g \left((e+X(t))^{\frac{1}{\kappa}}\right)(1+
 \|\mathcal{L}u \|_{L^{2}}).
\end{eqnarray}
It follows from (\ref{tyht307}) that
\begin{eqnarray}\label{tyht308}
\int_{e+X(0)}^{e+X(t)}\frac{d\tau}{\tau\sqrt{\ln \tau} g(\tau^{\frac{1}{\kappa}})}\leq C
\int_{0}^{t}{
\left(1+
 \|\mathcal{L}u(\tau)\|_{L^{2}}\right)\,d\tau}.
 \end{eqnarray}
It should be noted the following fact due to \eqref{logcobd}
\begin{eqnarray}\label{tyht309}
\int_{e}^{\infty}\frac{d\tau}{\tau\sqrt{\ln \tau} g(\tau^{\frac{1}{\kappa}})}=\sqrt{\kappa}\int_{e^{\frac{1}{\kappa}}}^{\infty}\frac{d\tau}{\tau\sqrt{\ln \tau} g(\tau)}=\infty.
\end{eqnarray}
Recalling \eqref{tyht301}, it yields
\begin{eqnarray}\label{tyht310}
\int_{0}^{t}{
\left(1+
 \|\mathcal{L}u(\tau)\|_{L^{2}}\right)\,d\tau}\leq C(t, u_{0},v_{0}, \theta_{0}).
 \end{eqnarray}
Keeping in mind \eqref{tyht308}, \eqref{tyht309} and \eqref{tyht310}, it is not hard to verify that $X(t)$ is finite for any given finite $t>0$, namely,
$$X(t)\leq C(t, u_{0},v_{0}, \theta_{0}).$$
Keeping in mind \eqref{tyht307}, we also get
$$\int_{0}^{t}{Y(\tau)\,d\tau}\leq C(t, u_{0},v_{0}, \theta_{0}).$$
As a result, the desired estimate \eqref{tyht302} follows immediately. By \eqref{tyht306}, we have
$$\|\Lambda^{\vartheta}u\|_{L^{2}}\leq C(\|u \|_{L^{2}}+\|\mathcal{L} \Lambda^{\sigma}u\|_{L^{2}}),\quad \forall\,\vartheta<\sigma+2.$$
Taking $\vartheta\in (2,\,\sigma+2)$ and using the simple interpolation yield
$$\int_{0}^{t}{\|\nabla u(\tau)\|_{L^{\infty}}\,d\tau}\leq C\int_{0}^{t}{(\| u(\tau)\|_{L^{2}}+\|\Lambda^{\vartheta}u(\tau)\|_{L^{2}})\,d\tau}\leq  C(t, u_{0},v_{0}, \theta_{0}),$$
which is \eqref{tyht303}. This completes the proof of Lemma \ref{yetle32}.
\end{proof}

\vskip .1in
With the above estimates in hand, we are ready to complete the proof of Theorem \ref{Th2}, which can be performed as that of Theorem \ref{Th1}. The details are as follows.
\begin{proof}[{Proof of Theorem \ref{Th2}}]
It follows from \eqref{ttkl012} that
\begin{align} \label{tyht311}
&\frac{1}{2}\frac{d}{dt}(\|\Lambda^{s}u(t)\|_{L^{2}}^{2}+\|\Lambda^{s}v(t)\|_{L^{2}}^{2}
+\|\Lambda^{s}\theta(t)\|_{L^{2}}^{2})+\|\mathcal{L}\Lambda^{s}
u\|_{L^{2}}^{2} \nonumber\\
&= -\int_{\mathbb{R}^{2}}{\Big(\Lambda^{s}\nabla\cdot(v\otimes v)\cdot \Lambda^{s} u
+\Lambda^{s}(v \cdot \nabla u)\cdot \Lambda^{s} v\Big)\,dx}-\int_{\mathbb{R}^{2}}{\Lambda^{s}(u \cdot \nabla\theta)\cdot \Lambda^{s} \theta\,dx}\nonumber\\& \quad
-\int_{\mathbb{R}^{2}}{ \Lambda^{s}(u \cdot \nabla v)\cdot \Lambda^{s} v\,dx}-\int_{\mathbb{R}^{2}}{ \Lambda^{s}(u \cdot \nabla u)\cdot \Lambda^{s}u\,dx}
\nonumber\\&:=J_{1}+J_{2}+J_{3}+J_{4}.
\end{align}
By \eqref{yfzdf68b1} and \eqref{yfzdf68b2}, we obtain
\begin{align}
J_{1} \leq& C \|\Lambda^{s}(v\otimes v)\|_{L^{\frac{2}{2-\sigma}}}\|\Lambda^{s+1}u\|_{L^{\frac{2}{\sigma}}}+C \|\Lambda^{s} (v\cdot\nabla u)\|_{L^{2}}\|\Lambda^{s}v\|_{L^{2}}\nonumber\\  \leq& C\|v\|_{L^{\frac{2}{1-\sigma}}}\|\Lambda^{s}v\|_{L^{2}}\|\Lambda^{s+2-\sigma}u\|_{L^{2}}
+C(\|\nabla u\|_{L^{\infty}}\|\Lambda^{s}v\|_{L^{2}}+\|\Lambda^{s}\nabla u\|_{L^{\frac{2}{\sigma}}}\|v\|_{L^{\frac{2}{1-\sigma}}})\|\Lambda^{s}v\|_{L^{2}}\nonumber\\ \leq& C\|\Lambda^{\sigma}v\|_{L^{2}}\|\Lambda^{s}v\|_{L^{2}}\|\Lambda^{s+2-\sigma}u\|_{L^{2}}
+C\|\nabla u\|_{L^{\infty}}\|\Lambda^{s}v\|_{L^{2}}^{2}.\nonumber
\end{align}
We also get by using \eqref{yfzdf68b3} that
\begin{align}
J_{2} \leq& C\|[\Lambda^{s}\partial_{i}, u_{i}]\theta\|_{L^{2}}\|\Lambda^{s}\theta\|_{L^{2}}\nonumber\\
 \leq& C(\|\nabla u\|_{L^{\infty}}\|\Lambda^{s}\theta\|_{L^{2}}+\| \theta\|_{L^{\frac{2}{1-\sigma}}}\|\Lambda^{s+1} u\|_{L^{\frac{2}{\sigma}}})\|\Lambda^{s}\theta\|_{L^{2}}
\nonumber\\
 \leq& C\|\nabla u\|_{L^{\infty}}\|\Lambda^{s}\theta\|_{L^{2}}^{2}+C\|\Lambda^{\sigma}\theta\|_{L^{2}}
 \|\Lambda^{s}\theta\|_{L^{2}}\|\Lambda^{s+2-\sigma}u\|_{L^{2}},\nonumber
\end{align}
\begin{align}
J_{3} \leq& C\|[\Lambda^{s}\partial_{i}, u_{i}]v\|_{L^{2}}\|\Lambda^{s}v\|_{L^{2}}
 \nonumber\\
 \leq&
C(\|\nabla u\|_{L^{\infty}}\|\Lambda^{s}v\|_{L^{2}}+\| v\|_{L^{\frac{2}{1-\sigma}}}\|\Lambda^{s+1} u\|_{L^{\frac{2}{\sigma}}})\|\Lambda^{s}v\|_{L^{2}}
\nonumber\\
 \leq& C\|\nabla u\|_{L^{\infty}}\|\Lambda^{s}v\|_{L^{2}}^{2}+C\|\Lambda^{\sigma}v\|_{L^{2}}
 \|\Lambda^{s}v\|_{L^{2}}\|\Lambda^{s+2-\sigma}u\|_{L^{2}}.
 \nonumber
\end{align}
The term $J_{4}$ admits the bound
\begin{align}
J_{4}  \leq
C\|\nabla u\|_{L^{\infty}}\|\Lambda^{s}u\|_{L^{2}}^{2}.\nonumber
\end{align}
Putting all the above estimates into \eqref{tyht311} yields
\begin{align} \label{tyht312}&
\frac{d}{dt}(\|\Lambda^{s}u(t)\|_{L^{2}}^{2}+\|\Lambda^{s}v(t)\|_{L^{2}}^{2}
+\|\Lambda^{s}\theta(t)\|_{L^{2}}^{2})+\|\mathcal{L}\Lambda^{s}
u\|_{L^{2}}^{2} \nonumber\\
 &\leq C\|\nabla u\|_{L^{\infty}}(\|\Lambda^{s}u\|_{L^{2}}^{2}+
\|\Lambda^{s}v\|_{L^{2}}^{2}+
\|\Lambda^{s}\theta\|_{L^{2}}^{2})\nonumber\\&\quad+C(\|\Lambda^{\sigma}v\|_{L^{2}}
+\|\Lambda^{\sigma}\theta\|_{L^{2}})
(\|\Lambda^{s}v\|_{L^{2}}+
\|\Lambda^{s}\theta\|_{L^{2}})\|\Lambda^{s+2-\sigma}u\|_{L^{2}}.
\end{align}
According to \eqref{tyht306}, we have
$$\|\Lambda^{s+2-\sigma}u\|_{L^{2}}\leq C(\|\Lambda^{s}u\|_{L^{2}}+\|\mathcal{L}\Lambda^{s}u\|_{L^{2}}),$$
which together with \eqref{tyht312} gives
\begin{align} \label{tyht313}&
\frac{d}{dt}(\|\Lambda^{s}u(t)\|_{L^{2}}^{2}+\|\Lambda^{s}v(t)\|_{L^{2}}^{2}
+\|\Lambda^{s}\theta(t)\|_{L^{2}}^{2})+\|\mathcal{L}\Lambda^{s}
u\|_{L^{2}}^{2} \nonumber\\
 &\leq C(1+\|\nabla u\|_{L^{\infty}}+\|\Lambda^{\sigma}v\|_{L^{2}}^{2}+
\|\Lambda^{\sigma}\theta\|_{L^{2}}^{2})(\|\Lambda^{s}u\|_{L^{2}}^{2}+
\|\Lambda^{s}v\|_{L^{2}}^{2}+
\|\Lambda^{s}\theta\|_{L^{2}}^{2}).
\end{align}
Noticing \eqref{tyht302}-\eqref{tyht303} and applying the Gronwall inequality to \eqref{tyht313}, we thus conclude
\begin{align}  \|\Lambda^{s}u(t)\|_{L^{2}}^{2}+\|\Lambda^{s}v(t)\|_{L^{2}}^{2}
+\|\Lambda^{s}\theta(t)\|_{L^{2}}^{2} +\int_{0}^{t}{\|\mathcal{L}\Lambda^{s}
u(\tau)\|_{L^{2}}^{2}\,d\tau}<\infty.
\nonumber
\end{align}
Consequently, we complete the proof of Theorem \ref{Th2}.
\end{proof}

\vskip .3in
\appendix
\section{Besov spaces and some useful facts}\label{aaa1}
This appendix recalls the Littlewood-Paley theory, introduces the
Besov spaces and provides Bernstein lemma.
We start with the Littlewood-Paley theory. We choose
some smooth radial non increasing function $\chi$ with values in $[0, 1]$ such that $\chi\in
C_{0}^{\infty}(\mathbb{R}^{n})$ is supported in the ball
$\mathcal{B}:=\{\xi\in \mathbb{R}^{n}, |\xi|\leq \frac{4}{3}\}$ and
and with value $1$ on $\{\xi\in \mathbb{R}^{n}, |\xi|\leq \frac{3}{4}\}$, then we set
$\varphi(\xi)=\chi\big(\frac{\xi}{2}\big)-\chi(\xi)$. One easily verifies that
${\varphi\in C_{0}^{\infty}(\mathbb{R}^{n})}$ is supported in the annulus
$\mathcal{C}:=\{\xi\in \mathbb{R}^{n}, \frac{3}{4}\leq |\xi|\leq
\frac{8}{3}\}$ and satisfies
$$\chi(\xi)+\sum_{j\geq0}\varphi(2^{-j}\xi)=1, \quad  \forall \xi\in \mathbb{R}^{n}.$$
Let $h=\mathcal{F}^{-1}(\varphi)$ and $\widetilde{h}=\mathcal{F}^{-1}(\chi)$, then we introduce the dyadic blocks $\Delta_{j}$ of our decomposition by setting
$$\Delta_{j}u=0,\ \ j\leq -2; \ \  \ \ \ \Delta_{-1}u=\chi(D)u=\int_{\mathbb{R}^{n}}{\widetilde{h}(y)u(x-y)\,dy};
$$
$$ \Delta_{j}u=\varphi(2^{-j}D)u=2^{jn}\int_{\mathbb{R}^{n}}{h(2^{j}y)u(x-y)\,dy},\ \ \forall j\in \mathbb{N}.
$$
We shall also use the
following low-frequency cut-off:
$$\ S_{j}u=\chi(2^{-j}D)u=\sum_{-1\leq k\leq j-1} \Delta_{k}u=2^{jn}\int_{\mathbb{R}^{n}}{\widetilde{h}(2^{j}y)u(x-y)\,dy},\ \ \forall j\in \mathbb{N}.$$

\vskip .1in
The nonhomogeneous Besov spaces are defined through
the dyadic decomposition.
\begin{define}
Let $s\in \mathbb{R}, (p,r)\in[1,+\infty]^{2}$. The nonhomogeneous
Besov space $B_{p,r}^{s}$ is defined as a space of $f\in
S'(\mathbb{R}^{n})$ such that
$$ B_{p,r}^{s}=\{f\in S'(\mathbb{R}^{n});  \|f\|_{B_{p,r}^{s}}<\infty\},$$
where
\begin{equation}\label{1}\nonumber
 \|f\|_{B_{p,r}^{s}}=\left\{\aligned
&\Big(\sum_{j\geq-1}2^{jrs}\|\Delta_{j}f\|_{L^{p}}^{r}\Big)^{\frac{1}{r}}, \quad \forall \ r<\infty,\\
&\sup_{j\geq-1}
2^{js}\|\Delta_{j}f\|_{L^{p}}, \quad \forall \ r=\infty.\\
\endaligned\right.
\end{equation}
\end{define}

\vskip .1in
Let us state the following classical facts
\begin{eqnarray}
\|f\|_{H^{s}}\approx \|f\|_{{B}_{2,\,2}^{s}},\qquad {B}_{p,\,r_{1}}^{s_{1}}\hookrightarrow {B}_{p,\,r_{2}}^{s_{2}},\,\,s_{1}>s_{2},\nonumber
\end{eqnarray}
\begin{eqnarray}\label{cvfEmd01}
{B}_{p_{1},\,r_{1}}^{s_{1}}\hookrightarrow {B}_{p_{2},\,r_{2}}^{s_{2}},\quad\,\,s_{1}-\frac{d}{p_{1}}=s_{2}-\frac{d}{p_{2}},\,\, 1\leq p_{1}\leq p_{2}\leq\infty,\,\,1\leq r_{1}\leq r_{2}\leq\infty.
\end{eqnarray}
We shall also need the mixed space-time spaces
$$ \|f\|_{L_{T}^{\rho}B_{p,r}^{s}}:=
\Big\|(2^{js}\|\Delta_{j}f\|_{L^{p}})_{l_{j}^{r}}\Big\|_{L_{T}^{\rho}}$$
and
$$ \|f\|_{\widetilde{L}_{T}^{\rho}B_{p,r}^{s}}:=
(2^{js}\|\Delta_{j}f\|_{L_{T}^{\rho}L^{p}})_{l_{j}^{r}}.$$
The following links are direct consequence of
the Minkowski inequality
$$L_{T}^{\rho}B_{p,r}^{s}\hookrightarrow \widetilde{L}_{T}^{\rho}B_{p,r}^{s},\qquad \mbox{if}\,\,r\geq \rho, \quad \mbox{and} \quad \widetilde{L}_{{T}}^{\rho}B_{p,r}^{s}\hookrightarrow {L}_{T}^{\rho}B_{p,r}^{s},\qquad \mbox{if}\,\,\rho\geq r.$$
In particular,
$$\widetilde{L}_{{T}}^{r}B_{p,r}^{s}\thickapprox {L}_{T}^{r}B_{p,r}^{s}.$$
\vskip .1in
We now introduce the Bernstein's inequalities, which are useful tools in dealing with Fourier localized functions and these
inequalities trade integrability for derivatives. The following lemma provides Bernstein
type inequalities for fractional derivatives

\begin{lemma} [see \cite{BCD}]\label{vfgty8xc}
Assume $1\leq a\leq b\leq\infty$. If the integer $j\geq-1$, then it holds
$$
\|\Lambda^{k}\Delta_{j}f\|_{L^b} \le C_1\, 2^{j k  +
jn(\frac{1}{a}-\frac{1}{b})} \|\Delta_{j}f\|_{L^a},\quad k\geq 0.
$$
If the integer $j\geq0$, then we have
$$
C_2\, 2^{ j k} \|\Delta_{j}f\|_{L^b } \le \|\Lambda^{k}\Delta_{j}f\|_{L^b } \le
C_3\, 2^{  j k + j n(\frac{1}{a}-\frac{1}{b})} \|\Delta_{j}f\|_{L^a},\quad
k\in \mathbb{{R}},
$$
where $C_1$, $C_2$ and $C_3$ are constants depending on $k,a$ and $b$
only.
\end{lemma}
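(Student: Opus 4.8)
The plan is to exploit the defining feature of the blocks: the Fourier transform of $\Delta_j f$ is supported in the dyadic annulus $2^j\mathcal{C}$ when $j\ge 0$, and in the ball $\mathcal{B}$ when $j=-1$. For such a function, applying $\Lambda^k$ reduces — after a dyadic rescaling — to convolution against a fixed rapidly decaying kernel, and Young's convolution inequality then produces exactly the claimed powers of $2^j$.

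Concretely, I would first fix auxiliary cut-offs: $\widetilde\varphi\in C_0^\infty(\mathbb{R}^n)$ supported in an annulus away from the origin with $\widetilde\varphi\equiv 1$ on $\mathrm{supp}\,\varphi$, and $\widetilde\chi\in C_0^\infty(\mathbb{R}^n)$ supported in a ball with $\widetilde\chi\equiv 1$ on $\mathrm{supp}\,\chi$, so that $\Delta_j f=\widetilde\varphi(2^{-j}D)\Delta_j f$ for $j\ge 0$ and $\Delta_{-1}f=\widetilde\chi(D)\Delta_{-1}f$. For the upper bound with $j\ge 0$, I would write $\Lambda^k\Delta_j f=K_j*\Delta_j f$, where $K_j$ is the Fourier multiplier with symbol $|\xi|^k\widetilde\varphi(2^{-j}\xi)$; the substitution $\xi=2^j\eta$ shows $K_j(x)=2^{jn}2^{jk}g_k(2^jx)$ with $g_k:=\mathcal{F}^{-1}\!\big(|\eta|^k\widetilde\varphi(\eta)\big)\in\mathcal{S}(\mathbb{R}^n)$ — here it is essential that $\widetilde\varphi$ is supported away from the origin, so that $|\eta|^k$ is smooth on its support for every $k\in\mathbb{R}$. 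Young's inequality with $1+\tfrac1b=\tfrac1c+\tfrac1a$ then gives
\[
\|\Lambda^k\Delta_j f\|_{L^b}\le \big\|K_j\big\|_{L^c}\,\|\Delta_j f\|_{L^a}=\|g_k\|_{L^c}\,2^{jk+jn(\frac1a-\frac1b)}\,\|\Delta_j f\|_{L^a}.
\]

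For the lower bound with $j\ge 0$ and $k\in\mathbb{R}$, I would introduce $\psi\in C_0^\infty$ supported in an annulus away from the origin with $\psi\equiv 1$ on $\mathrm{supp}\,\varphi$, and let $T_j$ be the multiplier with symbol $|\xi|^{-k}\psi(2^{-j}\xi)$. Since $\psi\varphi=\varphi$, one checks on the Fourier side that $T_j\big(\Lambda^k\Delta_j f\big)=\Delta_j f$, and the kernel of $T_j$ equals $2^{jn}2^{-jk}\rho_k(2^jx)$ with $\rho_k:=\mathcal{F}^{-1}\!\big(|\eta|^{-k}\psi(\eta)\big)\in\mathcal{S}(\mathbb{R}^n)$; Young's inequality with the $L^1$ norm of this kernel yields $\|\Delta_j f\|_{L^b}\le\|\rho_k\|_{L^1}\,2^{-jk}\|\Lambda^k\Delta_j f\|_{L^b}$, i.e. the stated lower bound with $C_2=\|\rho_k\|_{L^1}^{-1}$. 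Taking $b=a$ in both displays recovers the ``reverse Bernstein'' inequality, and combining them gives the full two-sided statement.

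The one point needing genuine care — and the main obstacle — is the low-frequency block $j=-1$: there the relevant symbol $|\xi|^k\widetilde\chi(\xi)$ contains the origin, so $|\xi|^k$ is no longer smooth on its support in general, and one cannot conclude that $\mathcal{F}^{-1}\!\big(|\xi|^k\widetilde\chi\big)$ is Schwartz. Instead I would argue directly that this kernel is bounded near the origin (its symbol is integrable) and decays at infinity at a rate controlled by the Hölder regularity of order $k$ of the symbol, hence lies in $L^c$ for every $c\in[1,\infty]$ precisely when $k\ge 0$. This is exactly why the lemma imposes $k\ge 0$ as soon as $j=-1$ is allowed, while permitting arbitrary real $k$ for $j\ge 0$. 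With this observation, the same Young's-inequality argument as above applies verbatim in the $j=-1$ case (now with scale $1$, so the power of $2^j$ is harmless and absorbed into the constant), completing the proof.
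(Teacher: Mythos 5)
The paper does not prove this lemma at all — it is quoted verbatim from \cite{BCD} — and your argument is precisely the standard proof given there: rescale the multiplier to a fixed Schwartz kernel supported on an annulus, apply Young's inequality, and invert with the multiplier $|\xi|^{-k}\psi(2^{-j}\xi)$ for the reverse inequality, so the approach is correct and matches the cited source. The only step worth tightening is the $j=-1$ block for non-integer $k>0$: plain integration by parts against $|\xi|^{k}\widetilde\chi(\xi)$ only yields $|x|^{-m}$ decay for integers $m<n+k$, hence at best $|x|^{-n}$ when $0<k<1$, which is borderline non-integrable; to place the kernel in $L^{1}$ one should, for instance, decompose the symbol dyadically near the origin, each piece $\varphi(2^{-l}\xi)|\xi|^{k}\widetilde\chi(\xi)$ contributing $O(2^{lk})$ to the $L^{1}$ norm of the kernel, summable over $l\le 0$ precisely because $k>0$ — this makes rigorous your appeal to the H\"older regularity of the symbol.
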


\vskip .1in

Finally, we recall the following bilinear estimate in the nonhomogeneous Besov spaces, which can be proved by the same argument used in dealing with \cite[Lemma1]{Yzhangbo}.
\begin{lemma} 
Assume that $1\leq p,\,r\leq\infty,\, s>0,\,\delta_{1}>0,\,\delta_{2}>0$ and
 $1\leq p_{i},\, r_{i}\leq\infty \,(i=1,2,3,4)$ satisfy
$$\frac{1}{p}=\frac{1}{p_{1}}+\frac{1}{p_{2}}=\frac{1}{p_{3}}+\frac{1}{p_{4}},\qquad
\frac{1}{r}=\frac{1}{r_{1}}+\frac{1}{r_{2}}=\frac{1}{r_{3}}+\frac{1}{r_{4}}.$$
Then there exists a constant $C$ such that
\begin{eqnarray}
\|f g\|_{{B}_{p,r}^{s}}\leq C\|f \|_{{B}_{p_{1},r_{1}}^{-\delta_{1}}}\|g\|_{{B}_{p_{2},r_{2}}^{s+\delta_{1}}}
+C\|g\|_{{B}_{p_{3},r_{3}}^{-\delta_{2}}}
\|f\|_{{B}_{p_{4},r_{4}}^{s+\delta_{2}}}.\nonumber
\end{eqnarray}
In particular, it holds
\begin{eqnarray}\label{xcrty12}
\|ff\|_{{B}_{p,r}^{s}}\leq C\|f \|_{{B}_{p_{1},r_{1}}^{-\delta_{1}}}\|f\|_{{B}_{p_{2},r_{2}}^{s+\delta_{1}}}.
\end{eqnarray}
\end{lemma}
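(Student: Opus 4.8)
The plan is to prove the estimate by Bony's paraproduct decomposition, exactly as in \cite[Lemma~1]{Yzhangbo}. First I would write
$$fg=T_{f}g+T_{g}f+R(f,g),$$
where $T_{f}g=\sum_{j}S_{j-1}f\,\Delta_{j}g$ is the paraproduct and $R(f,g)=\sum_{j}\sum_{|j-j'|\leq1}\Delta_{j}f\,\Delta_{j'}g$ is the remainder, and then estimate the three pieces separately in $B_{p,r}^{s}$, after which the asserted inequality follows by the triangle inequality. The special case $f=g$ is then immediate from $ff=2T_{f}f+R(f,f)$ once the three generic bounds are available (take $\delta_{2}=\delta_{1}$ with the corresponding exponents), and this is precisely \eqref{xcrty12}.

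For the paraproduct $T_{f}g$, note that $S_{j-1}f\,\Delta_{j}g$ has Fourier support in an annulus of size comparable to $2^{j}$, so only the indices $j$ with $|j-q|\leq N_{0}$ contribute to $\Delta_{q}(T_{f}g)$, whence
$$\|\Delta_{q}(T_{f}g)\|_{L^{p}}\leq C\sum_{|j-q|\leq N_{0}}\|S_{j-1}f\|_{L^{p_{1}}}\|\Delta_{j}g\|_{L^{p_{2}}}.$$
Since $\delta_{1}>0$, Bernstein's inequality (Lemma \ref{vfgty8xc}) and the definition of the Besov norm give $\|S_{j-1}f\|_{L^{p_{1}}}\leq C\,2^{j\delta_{1}}c_{j}\|f\|_{B_{p_{1},r_{1}}^{-\delta_{1}}}$ with $\|(c_{j})\|_{\ell^{r_{1}}}\leq C$ (this is exactly where the strict positivity of $\delta_{1}$ enters, through convergence of $\sum_{k\leq j-2}2^{k\delta_{1}}$). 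Multiplying by $2^{qs}$, using $2^{qs}\approx2^{js}$ on the range $|j-q|\leq N_{0}$, Hölder's inequality in $L^{p}$ (with $\frac1p=\frac1{p_1}+\frac1{p_2}$), and then Hölder's inequality in $\ell^{r}$ (with $\frac1r=\frac1{r_1}+\frac1{r_2}$, $r\geq1$) yields $\|T_{f}g\|_{B_{p,r}^{s}}\leq C\|f\|_{B_{p_{1},r_{1}}^{-\delta_{1}}}\|g\|_{B_{p_{2},r_{2}}^{s+\delta_{1}}}$. Interchanging the roles of $f$ and $g$, now with $\delta_{2}$ and the exponents $p_{3},p_{4},r_{3},r_{4}$, yields $\|T_{g}f\|_{B_{p,r}^{s}}\leq C\|g\|_{B_{p_{3},r_{3}}^{-\delta_{2}}}\|f\|_{B_{p_{4},r_{4}}^{s+\delta_{2}}}$.

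For the remainder, $\Delta_{j}f\,\Delta_{j'}g$ with $|j-j'|\leq1$ has Fourier support in a ball of size comparable to $2^{j}$, so $\Delta_{q}R(f,g)$ only involves $j\geq q-N_{0}$; writing $\|\Delta_{j}f\|_{L^{p_{1}}}\leq C\,2^{j\delta_{1}}c_{j}\|f\|_{B_{p_{1},r_{1}}^{-\delta_{1}}}$ and $\|\Delta_{j'}g\|_{L^{p_{2}}}\leq C\,2^{-j(s+\delta_{1})}d_{j}\|g\|_{B_{p_{2},r_{2}}^{s+\delta_{1}}}$ (with $(c_j)\in\ell^{r_1}$, $(d_j)\in\ell^{r_2}$ of bounded norm), one obtains
$$2^{qs}\|\Delta_{q}R(f,g)\|_{L^{p}}\leq C\|f\|_{B_{p_{1},r_{1}}^{-\delta_{1}}}\|g\|_{B_{p_{2},r_{2}}^{s+\delta_{1}}}\sum_{j\geq q-N_{0}}2^{(q-j)s}c_{j}d_{j}.$$
Here the exponent $s=(-\delta_{1})+(s+\delta_{1})>0$ is exactly what makes the sequence $(2^{ms})_{m\leq N_{0}}$ summable, so Young's convolution inequality in $\ell^{r}$ together with Hölder (using $\frac1r=\frac1{r_1}+\frac1{r_2}$, $\frac1p=\frac1{p_1}+\frac1{p_2}$, and $r\geq1$, $p\geq1$) gives $\|R(f,g)\|_{B_{p,r}^{s}}\leq C\|f\|_{B_{p_{1},r_{1}}^{-\delta_{1}}}\|g\|_{B_{p_{2},r_{2}}^{s+\delta_{1}}}$. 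Collecting the three bounds completes the proof.

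The step I expect to be the main obstacle — indeed the only genuinely nonroutine point — is the paraproduct estimate when the low-frequency factor carries the negative index $-\delta_{1}$: unlike the classical case where $S_{j-1}f$ is merely bounded in $L^{\infty}$, here one must trade the strict positivity $\delta_{1}>0$ for a factor $2^{j\delta_{1}}$ and then verify that this factor is absorbed exactly by the extra $\delta_{1}$ derivatives carried by the other factor, so that the $2^{js}$-weighted sequence still lands in $\ell^{r}$. Keeping track of the three distinct pairs of summation exponents $(r_{1},r_{2})$, $(r_{3},r_{4})$ and of the ranges $r\geq1$, $p\geq1$ throughout is the part demanding the most care.
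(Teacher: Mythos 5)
Your Bony-decomposition argument is correct and is precisely the approach the paper invokes: the paper gives no proof of this lemma, merely noting it "can be proved by the same argument used in dealing with \cite[Lemma 1]{Yzhangbo}," and that argument is exactly the paraproduct/remainder splitting you carry out, with the geometric summation $\sum_{k\le j-2}2^{k\delta_1}\lesssim 2^{j\delta_1}$ handling the negative-index low-frequency factor and $s>0$ ensuring summability in the remainder. All exponent bookkeeping ($\tfrac1p=\tfrac1{p_1}+\tfrac1{p_2}$, $\tfrac1r=\tfrac1{r_1}+\tfrac1{r_2}$, Young in $\ell^r$ for $r\ge1$) checks out.
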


\vskip .3in

\section{The proof of \eqref{cftghy1}}\label{aaa3}
To show \eqref{cftghy1}, it suffices to show
\begin{eqnarray}\label{cftyezc01}
\mbox{Supp}\widehat{u}\subset \lambda \mathcal{C}\Rightarrow
\|e^{-t\Lambda^{\gamma}}u\|_{L^{p}}\leq C_{1}e^{-C_{2}t\lambda^{\gamma}}\|u\|_{L^{p}},
\end{eqnarray}
where $\lambda>0$ and $\mathcal{C}$ is an annulus. To this end, we consider a function $\phi$ in $\mathcal{S}(\mathbb{R}^{n}\backslash \{0\})$, the value of which is
identically $1$ near the annulus $\mathcal{C}$. We then have
\begin{align}
e^{-t\Lambda^{\gamma}}u&=\mathcal{F}^{-1}\left(e^{-t|\xi|^{\gamma}}
\widehat{u}\right)\nonumber\\
&=\mathcal{F}^{-1}\left(\phi(\lambda^{-1}\xi)e^{-t|\xi|^{\gamma}}
\widehat{u}\right)
\nonumber\\
&=g_{\lambda}(t,\cdot)\ast u,\nonumber
\end{align}
where
$$g_{\lambda}(t,x):=\int_{\mathbb{R}^{n}}e^{ix\cdot\xi}\phi(\lambda^{-1}\xi)e^{-t|\xi|^{\gamma}}
\,d\xi.$$
Direct computations yield
$$g_{\lambda}(t,x)=\lambda^{n}\widetilde{g}(\lambda^{\gamma}t,\lambda x),$$
where
$$\widetilde{g}(t,x):=\int_{\mathbb{R}^{n}}e^{ix\cdot\xi}\phi(\xi)e^{-t|\xi|^{\gamma}}
\,d\xi.$$
Thus, our goal is to find positive real numbers $C_{1}$ and $C_{2}$ such
that for any $t>0$
\begin{eqnarray}\label{cftyezc02}
\|\widetilde{g}(t,x)\|_{L^{1}}\leq C_{1}e^{-C_{2}t}.
\end{eqnarray}
It follows from integration by parts that
\begin{align}\label{cftyezc03}
\widetilde{g}(t,x)&=(1+|x|^{2})^{-n}\int_{\mathbb{R}^{n}}(1+|x|^{2})^{n}
e^{ix\cdot\xi}\phi(\xi)e^{-t|\xi|^{\gamma}}\,d\xi\nonumber\\
&=(1+|x|^{2})^{-n}\int_{\mathbb{R}^{n}}\left((\mathbb{I}_{n}-\Delta_{\xi})^{n}
e^{ix\cdot\xi}\right)\phi(\xi)e^{-t|\xi|^{\gamma}}\,d\xi\nonumber\\
&=(1+|x|^{2})^{-n}\int_{\mathbb{R}^{n}}
e^{ix\cdot\xi}(\mathbb{I}_{n}-\Delta_{\xi})^{n}\left(\phi(\xi)
e^{-t|\xi|^{\gamma}}\right)\,d\xi.
\end{align}
By the Leibniz formula
$$(\mathbb{I}_{n}-\Delta_{\xi})^{n}\left(\phi(\xi)
e^{-t|\xi|^{\gamma}}\right)=\sum_{\widetilde{\alpha}\leq\alpha,\ |\alpha|\leq 2n}C_{\alpha}^{\widetilde{\alpha}}\left(D^{\alpha-\widetilde{\alpha}}\phi(\xi)
\right)\left(D^{\widetilde{\alpha}}
e^{-t|\xi|^{\gamma}}\right)$$
Direct computations and the fact that the support
of $\phi$ is included in an annulus, it is not hard to show that
\begin{align}
\left|\left(D^{\alpha-\widetilde{\alpha}}\phi(\xi)
\right)\left(D^{\widetilde{\alpha}}
e^{-t|\xi|^{\gamma}}\right)\right|&\leq C(1+t)^{|\widetilde{\alpha}|}e^{-t|\xi|^{\gamma}}\nonumber\\&\leq C(1+t)^{|\widetilde{\alpha}|}e^{-2C_{2}t}\nonumber\\&\leq Ce^{-C_{2}t},\nonumber
\end{align}
which along with \eqref{cftyezc03} implies
$$|\widetilde{g}(t,x)|\leq C(1+|x|^{2})^{-n}e^{-C_{2}t}.$$
Consequently, we obtain the desired \eqref{cftyezc02}, namely,
\begin{eqnarray}
\|\widetilde{g}(t,x)\|_{L^{1}}\leq C_{1}e^{-C_{2}t}.\nonumber
\end{eqnarray}
This allows us to conclude
\begin{align}
\|e^{-t\Lambda^{\gamma}} u\|_{L^{p}}&=\|g_{\lambda}(t,\cdot)\ast u\|_{L^{p}} \nonumber\\&\leq  \|g_{\lambda}(t,x)\|_{L_{x}^{1}}\|u\|_{L^{p}}
\nonumber\\
&=\|\lambda^{n}\widetilde{g}(\lambda^{\gamma}t,\lambda x)\|_{L_{x}^{1}}\| u\|_{L^{p}}
\nonumber\\
&=\|\widetilde{g}(\lambda^{\gamma}t,x)\|_{L_{x}^{1}}\| u\|_{L^{p}}
\nonumber\\
&\leq C_{1}e^{-C_{2}t\lambda^{\gamma}}\| u\|_{L^{p}},\nonumber
\end{align}
which is the desired estimate \eqref{cftyezc01}.

\vskip .3in
\textbf{Acknowledgements.}
The author was supported by the National Natural Science Foundation of China (No. 11701232) and the Natural Science Foundation of Jiangsu
Province (No. BK20170224). This work was carried out when the author was visiting the Department of Mathematics, University of Pittsburgh.
The author appreciates the hospitality of Professor Dehua Wang and Professor Ming Chen.

\vskip .4in


\begin{thebibliography}{00} \frenchspacing

\bibitem{BCD}
H. Bahouri, J.-Y. Chemin, R. Danchin, {\it Fourier Analysis and Nonlinear Partial Differential Equations}, in: Grundlehren der
Mathematischen Wissenschaften, vol. {\bf 343}, Springer, Heidelberg, 2011.

\bibitem{DWWZ}
B. Dong, W. Wang, J. Wu, H. Zhang, \emph{Global regularity results for the climate model with fractional dissipation}, Discrete Contin. Dyn. Syst. Ser. B 24(1) (2019), 211--229.

\bibitem{DWWYez17}
B. Dong, W. Wang, J. Wu, Z. Ye, H. Zhang, \emph{Global regularity for a class of 2D generalized tropical climate models},  J. Differential Equations 266 (2019), 6346--6382.

\bibitem{DWyeJNS18}
B. Dong, J. Wu, Z. Ye, \emph{Global regularity for a 2D tropical climate model with fractional dissipation}, J. Nonlinear Sci. 29 (2019), 511--550.

\bibitem{DWye2018}
B. Dong, J. Wu, Z. Ye, \emph{2D tropical climate model with fractional dissipation and without thermal diffusion}, 31 pages, submitted for publication (2018).

\bibitem{Dronioui}
J. Droniou, C. Imbert, \emph{Fractal first order partial differential equations}, Arch. Ration. Mech. Anal. 182 (2) (2006), 299--331.

\bibitem{FMP}
D. Frierson, A. Majda, O. Pauluis, \emph{Large scale dynamics of precipitation fronts in the tropical atmosphere: a novel relaxation limit},  Commun. Math. Sci. 2 (2004), 591--626.


\bibitem{G80}
A. Gill, \emph{Some simple solutions for heat-induced tropical circulation}, Quart. J. Roy. Meteor. Soc., 106 (1980), 447--462.

\bibitem{KP}
T. Kato, G. Ponce, \emph{Commutator estimates and the Euler and
Navier-Stokes equations},  Comm. Pure Appl. Math., 41 (1988),  891--907.

\bibitem{Kenig}
C.E. Kenig, G. Ponce, L. Vega, \emph{Well-posedness of the initial value problem for the Korteweg-de Vries equation}, J. American Math. Soc. 4 (1991), 323--347.

\bibitem{LiTiti}
J. Li, E.S. Titi, \emph{Global well-posedness of strong
solutions to a tropical climate model}, Discrete Contin. Dyn. Syst.
 36 (2016), 4495--4516.

\bibitem{LiTiti2}
J. Li, E.S. Titi, \emph{A tropical atmosphere model with moisture: global well-posedness and relaxation limit},
Nonlinearity  29 (2016), 2674--2714.

\bibitem{MA03}
A. Majda, J. Biello, \emph{The nonlinear interaction of barotropic and equatorial baroclinic Rossby waves}, J. Atmos. Sci., 60 (2003), 1809--1821.

\bibitem{Mt66}
T. Matsuno, \emph{Quasi-geostrophic motions in the equatorial area}, J. Meteor. Soc. Japan, 44 (1966), 25--42.

\bibitem{MeMMt}
A. Mellet, S. Mischler, C. Mouhot, \emph{Fractional diffusion limit for collisional kinetic equations}, Arch. Rat. Mech. Anal. 199 (2011), 493--525.

\bibitem{Pekalskis}
A. Pekalski, K. Sznajd-Weron, {\em Anomalous Diffusion. From Basics to Applications}, Lecture Notes in Phys., vol. 519,
Springer-Verlag, Berlin, 1999.

\bibitem{Woyczy}
W. Woyczy$\rm\acute{n}$ski, \emph{L$\rm\acute{e}$vy processes in the physical sciences, L$\rm\acute{e}$vy processes}, Birkh$\rm\ddot{{a}}$user Boston, Boston, MA, 2001, pp. 241--266.

\bibitem{Wuejde}
J. Wu, Dissipative quasi-geostrophic equations with $L^{p}$ data, Electron. J. Differential Equations 2001, No. 56, 13 pp.


\bibitem{wujmfm}
J. Wu, \emph{Global regularity for a class of generalized magnetohydrodynamic equations}, J. Math. Fluid Mech. 13 (2011), 295--305.

\bibitem{Ye16jmaa}
Z. Ye, \emph{Global regularity for a class of 2D tropical climate model}, J. Math. Anal. Appl. 446 (2017),  307--321.

\bibitem{Yzhangbo}
B. Yuan, B. Zhang, \emph{Blow-up criterion of strong solutions
to the Navier-Stokes equations in Besov spaces
with negative indices}, J. Differential Equations 242 (2007), 1--10.

\end{thebibliography}
\end{document}